
\documentclass[10pt]{amsart}
\usepackage{amsmath}
\usepackage{amssymb}
\usepackage{amsthm}
\usepackage[final]{hyperref}
\hypersetup{colorlinks=true, linkcolor=blue, anchorcolor=blue, citecolor=red, filecolor=blue, menucolor=blue, pagecolor=blue, urlcolor=blue}
\usepackage{graphicx}
\newtheorem{thm}{Theorem}
\newtheorem{lem}[thm]{Lemma}
\newtheorem{cor}[thm]{Corollary}

\newtheorem*{PWtheorem}{Paley-Wiener Theorem}
\theoremstyle{definition}

\theoremstyle{definition}
\newtheorem{rem}[thm]{Remark}

\newcommand{\abs}[1]{\vert #1 \vert}

\newcommand{\norm}[1]{\left\Vert #1 \right\Vert}

\title[KdV spatial analyticity]{Lower bounds on the radius of spatial analyticity for the KdV equation}

\author{Sigmund Selberg}
\address{Department of Mathematics\\
University of Bergen\\
PO Box 7803\\
5020 Bergen\\ Norway}
\email{sigmund.selberg@uib.no}

\author{Daniel Oliveira da Silva}
\address{Department of Mathematical Sciences\\
NTNU\\7491 Trondheim\\ Norway
\\
(Current affiliation: Department of Mathematics, Nazarbayev University)}
\email{daniel.dasilva@nu.edu.kz}

\keywords{Korteweg-de Vries; well-posedness; analytic spaces; Gevrey spaces}
\subjclass[2000]{35Q40; 35L70; 81V10}

\thanks{The authors are supported by the Research Council of Norway, grant no.~213474/F20.}

\begin{document}

\begin{abstract}
We present lower bounds for the uniform radius of spatial analyticity of solutions to the Korteweg-de Vries equation, which improve earlier results due to Bona, Gruji\'c and Kalisch.
\end{abstract}

\maketitle


\section{Introduction}

Consider the Cauchy problem for the Korteweg-de Vries (KdV) equation
\begin{equation}\label{kdv}
\begin{cases}
u_t + u_{xxx} + u u_x = 0, \qquad t,x \in \mathbb R,\\
u(x,0) = u_0(x),
\end{cases}
\end{equation}
where the unknown $u(x,t)$ and the datum $u_0(x)$ are real-valued.

This equation was originally derived by Korteweg and de Vries in \cite{kdv1895} as a model for long waves travelling through a rectangular canal, and has since seen many generalizations.  We are interested in studying well-posedness of \eqref{kdv} for data $u_0$ in a Gevrey-type space $G^{\sigma,s} = G^{\sigma,s}(\mathbb R)$ defined by the norm
\[
\| f \|_{G^{\sigma, s}(\mathbb R)} = \left\| e^{\sigma | \xi |} (1 + | \xi |)^s \hat{f}(\xi) \right\|_{L^2_\xi (\mathbb{R})}.
\]
Here $\hat{f}$ denotes the spatial Fourier transform of $f$. For $\sigma=0$ the space $G^{\sigma,s}$ coincides with the standard Sobolev space $H^s$, and the well-posedness of \eqref{kdv} in these spaces has been studied in many works; see \cite{BS75, ST76, Kato79, Bourgain1993, KPV1993, KPV1996} and the references therein. It is by now known that local well-posedness holds in $H^s$ for $s > -3/4$ (see \cite{KPV1996}) and that for $s \le -3/4$ there is ill-posedness (see \cite{CCT2003, Kishimoto2009, Molinet2011}). For $s \ge 0$ the solutions extend globally in time due to the conservation of $\int u^2(x,t) \, dx$. Our aim here is to extend the well-posedness theory to the spaces $G^{\sigma,s}$. The interest in these spaces is due to the following fact, for which a discussion can be found in \cite[p. 209]{K1976}.
\begin{PWtheorem}
Let $\sigma > 0$, $s \in \mathbb R$.  Then the following are equivalent:
\begin{enumerate}
\item $f \in G^{\sigma, s}$.
\item $f$ is the restriction to the real line of a function $F$ which is holomorphic in the strip
\[
S_\sigma =  \{ x + i y :\ x,y \in \mathbb{R}, | y | < \sigma\}
\]
and satisfies
\[
\sup_{| y | < \sigma} \| F( x + i y ) \|_{H^s_x} < \infty.
\]
\end{enumerate}
\end{PWtheorem}

Thus, a function in $G^{\sigma,s}$ has radius of analyticity at least $\sigma$ at every point on the real line. This fact leads us to consider the following question: given $u_0 \in G^{\sigma,s}$ for some initial radius $\sigma > 0$, how does the radius of analyticity of the solution $u$ evolve in time?

This question has received some attention in the case of the KdV equation and its generalizations.  For short times, it is known that the radius of analyticity remains at least as large as the initial radius; see Gruji\'c and Kalisch \cite{GK2002} for the non-periodic case, and also Li \cite{L2012}, Himonas and Petronilho \cite{HP2012}, and Hannah, Himonas and Petronilho \cite{HHP2011} for the periodic case.  For the global problem, the non-periodic case was studied by Bona, Gruji\'c and Kalisch in \cite{BGK2005}, where it was shown that the radius of analyticity for the KdV equation can decay no faster than $t^{-12}$ as $t \to \infty$ (see Theorem 4 and Corollary 2 in \cite{BGK2005}). In the present paper we improve that result significantly, almost obtaining a rate $t^{-4/3}$. Instead of attempting to obtain a priori estimates on the solution in Gevrey-modified Bourgain spaces directly on any given, large time interval $[0,T]$, as was done in \cite{BGK2005}, we proceed indirectly by decomposing into short subintervals, on each of which we use a short-time local well-posedness result obtained by a contraction argument. On each subinterval we then estimate the growth of the Gevrey-modified version of the conserved quantity $\int u^2(x,t) \, dx$ in terms of the strip width $\sigma > 0$. By taking $\sigma$ sufficiently small we are then able to repeat the local result enough times to reach the target time $T$. This idea was introduced by the second author and Achenef Tesfahun in \cite{ST2015}, where it was applied to the Dirac-Klein-Gordon equations.

The first result we will prove is concerned with short-time persistence of the radius of analyticity. This result extends Theorem 1 from \cite{GK2002}, which covered the range $s \ge 0$.

\begin{thm}\label{local}
Let $\sigma > 0$ and $s > -3/4$. Then for any $u_0 \in G^{\sigma,s}$ there exists a time $\delta = \delta(\| u_0 \|_{G^{\sigma,s}}) > 0$ and a unique solution $u$ of \eqref{kdv} on the time interval $(-\delta,\delta)$ such that
\[
u \in C([-\delta,\delta], G^{\sigma,s}).
\]
Moreover, the solution depends continuously on the data $u_0$, and we have
\[
  \delta = \frac{c_0}{(1+\|u_0\|_{G^{\sigma,s}})^a}
\]
for some constants $c_0 > 0$ and $a > 1$ depending only on $s$.
\end{thm}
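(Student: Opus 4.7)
The plan is to prove Theorem \ref{local} by a Picard iteration in a Gevrey-modified Bourgain space. For $s > -3/4$ and $b$ slightly above $1/2$, introduce
\[
\|u\|_{X^{\sigma,s,b}} = \bigl\| e^{\sigma|\xi|}(1+|\xi|)^s \langle \tau - \xi^3 \rangle^b \widetilde{u}(\xi,\tau) \bigr\|_{L^2_{\tau,\xi}(\mathbb R^2)},
\]
where $\widetilde{u}$ denotes the space-time Fourier transform, together with its time-restricted version $X^{\sigma,s,b}_\delta$ on the slab $\mathbb R \times (-\delta,\delta)$. Writing \eqref{kdv} in Duhamel form
\[
u(t) = S(t)u_0 - \tfrac12 \int_0^t S(t-t')\partial_x(u^2)(t')\,dt',
\]
with $S(t) = e^{-t\partial_x^3}$ the linear KdV propagator, the task reduces to showing that the right-hand side is a contraction on a ball in $X^{\sigma,s,b}_\delta$.

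The first step is to lift the standard Kenig--Ponce--Vega estimates for $X^{s,b}$ to the Gevrey setting. The linear bound $\|S(t)u_0\|_{X^{\sigma,s,b}_\delta} \lesssim \|u_0\|_{G^{\sigma,s}}$ and the Duhamel bound with a time-factor gain $\delta^\theta$ (for some $\theta > 0$ arising from the slack between $b$ and $1/2$) are purely Fourier-$L^2$ identities and transfer verbatim once the weight $e^{\sigma|\xi|}$ is inserted. The crucial point is the bilinear estimate
\[
\bigl\|\partial_x(uv)\bigr\|_{X^{\sigma,s,b-1}} \lesssim \|u\|_{X^{\sigma,s,b}} \|v\|_{X^{\sigma,s,b}},
\]
which should follow from the $\sigma = 0$ case of \cite{KPV1996} via the pointwise submultiplicative bound
\[
e^{\sigma|\xi|} \le e^{\sigma|\xi_1|}\, e^{\sigma|\xi_2|} \quad \text{whenever } \xi = \xi_1 + \xi_2.
\]
Distributing this weight over the two factors in the Fourier convolution defining $\widetilde{uv}$, and dominating $\widetilde u$ and $\widetilde v$ pointwise by their absolute values, reduces the bound to the KPV bilinear estimate applied to nonnegative symbols.

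With these estimates in hand, the standard short-time contraction shows that the Duhamel map is a contraction on the ball of radius $\sim \|u_0\|_{G^{\sigma,s}}$ in $X^{\sigma,s,b}_\delta$ as soon as $\delta^\theta(1+\|u_0\|_{G^{\sigma,s}}) \ll 1$. This yields the stated lifespan $\delta = c_0(1+\|u_0\|_{G^{\sigma,s}})^{-a}$ with $a = 1/\theta > 1$. Continuity $u \in C([-\delta,\delta],G^{\sigma,s})$ follows from the embedding $X^{\sigma,s,b}_\delta \hookrightarrow C([-\delta,\delta],G^{\sigma,s})$, which requires $b > 1/2$, while uniqueness and continuous dependence on the data are standard for contraction-based well-posedness arguments.

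The main obstacle lies in rigorously executing the Gevrey-to-KPV reduction of the bilinear estimate: one must verify that splitting $e^{\sigma|\xi|}$ across the two Fourier factors and replacing the transforms by their moduli is compatible with the multilinear structure on which the KPV proof (itself a delicate blend of Strichartz, Littlewood--Paley, and pointwise $\tau - \xi^3$ arithmetic) depends, and that the resulting constants are uniform in $\sigma > 0$. Once this is settled, the rest of the proof is routine Bourgain-space technology.
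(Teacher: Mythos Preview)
Your approach is essentially the same as the paper's: iteration in the Gevrey--Bourgain space $X^{\sigma,s,b}(\delta)$, the bilinear estimate obtained from KPV via the submultiplicativity $e^{\sigma|\xi|}\le e^{\sigma|\xi-\xi_1|}e^{\sigma|\xi_1|}$, and a time-gain $\delta^{b'-b}$ to close the contraction. Two remarks. First, what you flag as the ``main obstacle'' is in fact immediate: once the KPV estimate is written in its Fourier--$L^2$ form (as an inequality for arbitrary nonnegative $f,g\in L^2_{\xi,\tau}$), the Gevrey version is the same inequality with an extra factor $e^{\sigma|\xi|}/(e^{\sigma|\xi-\xi_1|}e^{\sigma|\xi_1|})\le 1$ in the integrand, so no inspection of the KPV proof is needed and the constant is automatically $\sigma$-independent. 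Second, your claim that uniqueness is ``standard for contraction-based well-posedness'' only gives uniqueness in $X^{\sigma,s,b}(\delta)$, whereas the theorem asserts uniqueness in the larger class $C([-\delta,\delta],G^{\sigma,s})$; the paper closes this gap by a separate $L^2$ energy argument (multiply the difference equation by $w=u-v$, integrate by parts, and use Gr\"onwall together with $\|u_x\|_{L^\infty_x}<\infty$, which holds since $G^{\sigma,s}$ embeds in $C^\infty$).
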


Thus for short times the solution remains analytic in the initial strip. Our second and main result yields an estimate on the rate at which the width of the strip can decay with time. Thus $\sigma$ will now depend on time, and we denote its initial value by $\sigma_0$.

\begin{thm}\label{global}
Let $\sigma_0 > 0$ and $s > -3/4$, and assume $u_0 \in G^{\sigma_0,s}$.  The solution $u$ obtained in Theorem \ref{local} extends globally in time, and for any $T > 0$ we have
\[
  u \in C\left([-T,T],G^{\sigma(T),s}\right)
\]
with
\[
\sigma(T) = \min \left\{\sigma_0, c T^{-(4/3 + \varepsilon)} \right\},
\]
where $\varepsilon > 0$ can be taken arbitrarily small and $c > 0$ is a constant depending on $u_0$, $\sigma_0$, $s$ and $\varepsilon$.
\end{thm}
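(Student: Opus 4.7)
The plan follows the scheme introduced in \cite{ST2015}: partition $[0,T]$ into subintervals of length given by Theorem \ref{local}, apply the local theory on each, and control the growth of a Gevrey-modified $L^2$-norm by an approximate conservation law. I would first reduce to tracking only the $G^{\sigma(t),0}$-norm. The embedding $G^{\sigma_0,s} \hookrightarrow G^{\sigma_0',0}$ holds for any $\sigma_0' \in (0,\sigma_0)$ and any $s \in \mathbb R$, because the polynomial weight $(1+|\xi|)^{-2s}$ is dominated by $C\, e^{2(\sigma_0-\sigma_0')|\xi|}$; so after shrinking $\sigma_0$ slightly I may assume $u_0 \in G^{\sigma_0,0}$. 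Conversely, once a uniform bound on $\|u(t)\|_{G^{\sigma(t),0}}$ has been established, the conclusion $u(t) \in G^{\sigma(t),s}$ follows for $s \le 0$ trivially, and for $s > 0$ by a concluding application of Theorem \ref{local} with persistence of the higher regularity on a short final subinterval.

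The central technical step is the approximate conservation law
\[
\bigl| \|u(\delta)\|_{G^{\sigma,0}}^2 - \|u(0)\|_{G^{\sigma,0}}^2 \bigr| \le C\, \sigma^{\alpha}\, \|u\|_{X^{\sigma,0,b}_\delta}^3,
\]
holding on each subinterval $[0,\delta]$ on which Theorem \ref{local} yields a solution, with exponent $\alpha$ arbitrarily close to $3/4$. I would derive this by pairing the KdV equation with $A_\sigma^2 u$, where $A_\sigma$ is the Fourier multiplier with symbol $e^{\sigma|\xi|}$: skew-adjointness of $\partial_x^3$ kills the dispersion contribution, and the nonlinear contribution becomes a trilinear form whose symbol on the zero-sum hyperplane $\xi_1+\xi_2+\xi_3 = 0$ is $m(\xi_1,\xi_2,\xi_3) = e^{\sigma|\xi_1|} - e^{\sigma|\xi_2|} e^{\sigma|\xi_3|}$. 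A mean value argument bounds $|m|$ by $C\,\sigma^{\alpha}\,(|\xi_2|+|\xi_3|-|\xi_1|)^{\alpha}\, e^{\sigma(|\xi_2|+|\xi_3|)}$, and the extra $\alpha$-derivative weight is then absorbed by a Gevrey-lifted version of the Kenig-Ponce-Vega bilinear $X^{s,b}$ estimate that already underlies Theorem \ref{local}.

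Given this, the iteration is routine. Write $N_0 := \|u_0\|_{G^{\sigma_0,0}}$, and let $\delta \sim (1+N_0)^{-a}$ be the local existence time supplied by Theorem \ref{local}. The Bourgain-space bound $\|u\|_{X^{\sigma,0,b}_\delta} \lesssim N_0$ coming from the contraction argument, combined with the conservation law above, yields the recursion
\[
\|u((n+1)\delta)\|_{G^{\sigma,0}}^2 \le \|u(n\delta)\|_{G^{\sigma,0}}^2 + C\, \sigma^{\alpha}\, N_0^3.
\]
As long as the left side remains bounded by $4 N_0^2$, this can be iterated $n \sim 1/(\sigma^{\alpha} N_0)$ times, reaching total time $T_\ast \sim n\delta \sim \sigma^{-\alpha}\, (1+N_0)^{-(a+1)}$. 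Solving $T_\ast \ge T$ for $\sigma$ gives $\sigma \sim (1+N_0)^{-(a+1)/\alpha}\, T^{-1/\alpha}$, which is the claimed $T^{-(4/3+\varepsilon)}$ rate once $\alpha$ is chosen close enough to $3/4$.

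The principal obstacle is the trilinear commutator estimate with gain $\sigma^\alpha$ for $\alpha$ arbitrarily close to $3/4$. Since the Kenig-Ponce-Vega bilinear estimate supplies exactly $3/4$ derivatives of smoothing, pushing $\alpha$ to that endpoint requires a delicate choice of the Bourgain exponent $b$ slightly above $1/2$ and careful control of $m$ on the near-resonant set where $|\xi_1|$ is close to $|\xi_2|+|\xi_3|$. The $\varepsilon$ loss in the final rate is precisely the cost of staying strictly below this endpoint while still closing the local well-posedness argument at negative Sobolev regularity $s > -3/4$.
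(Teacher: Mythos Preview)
Your proposal is correct and follows essentially the same route as the paper: an approximate conservation law for $\|u(t)\|_{G^{\sigma,0}}^2$ with a gain of $\sigma^\rho$ for any $\rho<3/4$, obtained from the commutator symbol bound (your mean-value estimate on $m$ is exactly the paper's Lemma~\ref{symbol}, since on $\xi_1+\xi_2+\xi_3=0$ one has $|\xi_2|+|\xi_3|-|\xi_1|=2\min(|\xi_2|,|\xi_3|)$ or $0$) together with the KPV bilinear estimate at regularity $-\rho$ (the paper's Corollary~\ref{Cor2}), and then a bootstrap iteration of the local theory on subintervals of fixed length.

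One point deserves a cleaner treatment. Your recovery of $G^{\sigma(T),s}$ regularity for $s>0$ via ``persistence of the higher regularity on a short final subinterval'' is not justified by Theorem~\ref{local} as stated, since the lifespan there depends on the full $G^{\sigma,s}$ norm, which the conservation law does not control; making this work would require a tame version of the local theory whose time step depends only on the $G^{\sigma,0}$ norm. The paper avoids this entirely: having proved $u\in C([0,T],G^{\sigma,0})$, it simply invokes the embedding $G^{\sigma,0}\subset G^{\sigma/2,s}$ (valid for every $s$) and absorbs the factor $1/2$ into the constant $c$.
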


Thus the solution at any time $t$ is analytic in the strip $S_{\sigma(|t|)}$.

Theorem \ref{local} is proved in section \ref{localtheory} by an iteration argument relying on the Gevrey-modified version of a bilinear estimate from \cite{KPV1996}. For the proof of Theorem \ref{global} we must first  establish the existence of an almost conserved quantity guaranteeing that the norm of the solution grows sufficiently slowly so that we can repeatedly apply Theorem \ref{local} enough times to extend the solution up to any time $T > 0$ by taking $\sigma$ small enough.  The derivation of the necessary almost conservation law is contained in section \ref{conlaw}.  The proof of the main result, Theorem \ref{global}, is then given in section \ref{globaltheory}.

At the core of our analysis are some bilinear estimates derived in section \ref{bilinear} as corollaries of an estimate proved in \cite{KPV1996}. We also discuss related counterexamples.

We begin with a discussion of the necessary function spaces in which we will carry out our iteration argument.


\section{Function spaces}

In this section we discuss the function spaces which will be used in the proofs.  First we note the following embedding property of the Gevrey spaces:
\begin{equation}\label{embedding}
  G^{\sigma,s} \subset G^{\sigma',s'} \quad \text{for all $0 < \sigma' < \sigma$ and $s,s' \in \mathbb R$},
\end{equation}
with a corresponding norm inequality $\norm{f}_{G^{\sigma',s'}} \le C_{\sigma,\sigma',s,s'} \norm{f}_{G^{\sigma,s}}$.

 In addition to the spaces $G^{\sigma, s}$, we will also need the Bourgain spaces $X^{s,b}$, defined by the norm
\[
\| u \|_{X^{s,b}} = \left\| (1 + | \xi |)^{s} (1 + | \tau - \xi^3| )^b \widetilde{u} (\xi, \tau) \right\|_{L^2_{\tau, \xi}},
\]
where $\widetilde{u}$ denotes the spacetime Fourier transform,
\[
\widetilde{u}(\xi, \tau) = \int_{\mathbb{R}^2} e^{-i(t \tau + x \xi)} u(x, t)\ dx \, dt.
\]
In addition, we will also need a hybrid of the Gevrey and Bourgain spaces, denoted $X^{\sigma, s, b}$ and defined by the norm
\[
\| u \|_{X^{\sigma, s, b}} = \left\| e^{\sigma | D_x |}u \right\|_{X^{s, b}} = \left\| e^{\sigma | \xi |} (1 + | \xi |)^{s} (1 + | \tau - \xi^3| )^b \widetilde{u} (\xi, \tau) \right\|_{L^2_{\tau, \xi}}.
\]
Here $D_x = -i\partial_x$, which has Fourier symbol $\xi$. Observe that $X^{0,s,b} = X^{s,b}$. Finally, we will need the restrictions of $X^{s, b}$ and $X^{\sigma, s, b}$ to a time slab $\mathbb R \times (-\delta, \delta)$.  These spaces are denoted by $X^{s, b}(\delta)$ and $X^{\sigma, s, b}(\delta)$, respectively, and are Banach spaces when equipped with the norms
\[\begin{aligned}
\| u \|_{X^{s, b}(\delta)} & = \inf \left\{ \| v \|_{X^{s, b}}: \ v = u \text{ on } \mathbb{R} \times (-\delta, \delta) \right\} \\
\| u \|_{X^{\sigma, s, b}(\delta)} & = \inf \left\{ \| v \|_{X^{\sigma, s, b}}: \ v = u \text{ on } \mathbb{R} \times (-\delta, \delta) \right\}.
\end{aligned}\]
By the substitution $u \rightarrow e^{\sigma | D_x |}u$, the properties of $X^{s,b}$ and its restrictions carry over to $X^{\sigma, s, b}$.  The necessary properties are contained in the following lemmas; proofs of the first two lemmas can be found in section 2.6 of \cite{Tao2006}, whereas the third lemma follows by the argument used to prove Lemma 3.1 of \cite{Colliander2004}.

\begin{lem}\label{embed}
Let $\sigma \ge 0$, $s \in \mathbb{R}$ and $b > 1/2$. Then $X^{\sigma,s,b} \subset C(\mathbb R,G^{\sigma,s})$ and
\[
  \sup_{t \in \mathbb{R}} \| u(t) \|_{G^{\sigma, s}} \leq C \| u \|_{X^{\sigma, s, b}},
\]
where the constant $C > 0$ depends only on $b$.
\end{lem}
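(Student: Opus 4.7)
The plan is to reduce the statement to the $\sigma=0$ case, where it is the classical embedding $X^{s,b} \subset C(\mathbb R, H^s)$ for $b > 1/2$, by using the substitution $v = e^{\sigma|D_x|}u$.

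First, observe that $e^{\sigma|D_x|}$ is a bijective isometry from $G^{\sigma,s}$ onto $H^s$, since it multiplies $\hat f(\xi)$ by $e^{\sigma|\xi|}$, which exactly cancels the extra factor in the $G^{\sigma,s}$ norm; in particular $\|v(t)\|_{H^s} = \|u(t)\|_{G^{\sigma,s}}$ for every $t$. From the definition of the $X^{\sigma,s,b}$ norm we also have $\|v\|_{X^{s,b}} = \|u\|_{X^{\sigma,s,b}}$. Since the multiplier acts only in the spatial variable, continuity of $v$ as an $H^s$-valued function of $t$ is equivalent to continuity of $u$ as a $G^{\sigma,s}$-valued function of $t$, and the lemma is equivalent to its $\sigma=0$ version.

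For $\sigma = 0$, I would argue by Fourier inversion in $\tau$ combined with Cauchy--Schwarz. Starting from
\[
\widehat{v(t)}(\xi) = \frac{1}{2\pi} \int_{\mathbb R} e^{it\tau}\, \widetilde v(\xi,\tau)\, d\tau
\]
and splitting the integrand as $(1+|\tau-\xi^3|)^{-b} \cdot (1+|\tau-\xi^3|)^{b}\,\widetilde v(\xi,\tau)$, Cauchy--Schwarz and translation-invariance of Lebesgue measure in $\tau$ yield
\[
|\widehat{v(t)}(\xi)|^2 \le \frac{C_b}{(2\pi)^2} \int_{\mathbb R} (1+|\tau-\xi^3|)^{2b}\,|\widetilde v(\xi,\tau)|^2\,d\tau,
\]
where $C_b = \int_{\mathbb R} (1+|\tau|)^{-2b}\,d\tau$ is finite precisely because $2b > 1$. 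The bound is uniform in $t$. Multiplying by $(1+|\xi|)^{2s}$ and integrating in $\xi$ produces $\sup_t \|v(t)\|_{H^s} \le C\|v\|_{X^{s,b}}$ with $C$ depending only on $b$.

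Continuity of $t \mapsto v(t)$ in $H^s$ then follows from two applications of dominated convergence: for a sequence $t_n \to t$, first in $\tau$ to obtain pointwise convergence $\widehat{v(t_n)}(\xi) \to \widehat{v(t)}(\xi)$ via the same Cauchy--Schwarz majorant; then in $\xi$, where an $L^1_\xi$ majorant for $(1+|\xi|)^{2s}\,|\widehat{v(t_n)}(\xi) - \widehat{v(t)}(\xi)|^2$ is, up to a factor of $4$, the right-hand side of the previous display multiplied by $(1+|\xi|)^{2s}$, whose integral in $\xi$ is a finite multiple of $\|v\|_{X^{s,b}}^2$. I do not anticipate a serious obstacle; the only technical point is the finiteness and uniformity (in $\xi$) of the $\tau$-integral $C_b$, which is exactly where the hypothesis $b > 1/2$ enters.
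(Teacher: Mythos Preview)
Your argument is correct and matches the paper's approach: the paper explicitly notes that the substitution $u \to e^{\sigma|D_x|}u$ reduces such statements to the $\sigma=0$ case, and for that case it simply cites section~2.6 of Tao's book rather than giving a proof. What you have written is precisely that reduction together with a standard derivation of the $X^{s,b} \subset C(\mathbb{R},H^s)$ embedding via Cauchy--Schwarz in $\tau$ and dominated convergence, so there is nothing to add.
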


\begin{lem}\label{exponent}
Let $\sigma \ge 0$, $s \in \mathbb{R}$, $-1/2 < b < b' < 1/2$ and $\delta > 0$.  Then
\[
\| u \|_{X^{\sigma, s, b}(\delta)} \leq C \delta^{b' - b} \| u \|_{X^{\sigma, s, b'}(\delta)},
\]
where $C$ depends only on $b$ and $b'$.
\end{lem}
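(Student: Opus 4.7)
The plan is to reduce Lemma \ref{exponent} to a one-dimensional Sobolev-in-time estimate. Since the multiplier $e^{\sigma|D_x|}$ is an isometric isomorphism $X^{\sigma,s,b}\to X^{s,b}$ that commutes with restriction to a time slab, it suffices to treat the case $\sigma=0$. Fix a cutoff $\eta\in C_c^\infty(\mathbb R)$ with $\eta\equiv 1$ on $[-1,1]$ and set $\eta_\delta(t)=\eta(t/\delta)$. Given $u\in X^{s,b'}(\delta)$ and any extension $v$ of $u$ to $\mathbb R^2$, the product $\eta_\delta v$ is again an extension of $u$ because $\eta_\delta\equiv 1$ on $(-\delta,\delta)$; hence by the definition of the restriction norm it is enough to establish the global bound
\[
\|\eta_\delta v\|_{X^{s,b}}\le C\delta^{b'-b}\|v\|_{X^{s,b'}}\qquad(v\in X^{s,b'})
\]
and then take the infimum over extensions.

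The next step is to peel off the spatial variable. Because multiplication by $\eta_\delta(t)$ commutes with the spatial weight $(1+|\xi|)^s$ and with the phase $e^{it\xi^3}$, Plancherel in $\xi$ together with the substitution $\tau\mapsto\tau+\xi^3$ reduces the preceding display, pointwise in $\xi$, to the purely temporal estimate
\[
\|\eta_\delta g\|_{H^b_t(\mathbb R)}\le C\delta^{b'-b}\|g\|_{H^{b'}_t(\mathbb R)}
\]
valid for $g\in H^{b'}(\mathbb R)$ and $-1/2<b<b'<1/2$.

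To prove this scalar inequality I pass to the Fourier side, where $\widehat{\eta_\delta g}=\hat\eta_\delta*\hat g$ with $\hat\eta_\delta(\tau)=\delta\hat\eta(\delta\tau)$, and split the convolution according to whether the output frequency satisfies $|\tau|\lesssim\delta^{-1}$ or $|\tau|\gg\delta^{-1}$. On the low-frequency region the pointwise bound $\langle\tau\rangle^b\lesssim\delta^{b-b'}\langle\tau\rangle^{b'}$, Young's inequality and $\|\hat\eta_\delta\|_{L^1}=\|\hat\eta\|_{L^1}$ combine to yield the factor $\delta^{b'-b}$ directly.

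The main obstacle is the high-frequency region $|\tau|\gg\delta^{-1}$, where the gain must be extracted from the rapid decay of $\hat\eta(\delta\tau)$; here the hypothesis $|b|,|b'|<1/2$ is essential, as it is exactly what guarantees convergence at $\tau=0$ of the weighted Fourier integrals arising after Cauchy--Schwarz, and it is why the lemma fails at the thresholds $b=\pm1/2$. With a careful dyadic decomposition of these remaining pieces, as carried out in Section 2.6 of \cite{Tao2006}, the factor $\delta^{b'-b}$ is obtained, completing the proof.
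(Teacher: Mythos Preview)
The paper does not supply its own proof of this lemma; it simply refers the reader to Section~2.6 of \cite{Tao2006}. Your proposal follows exactly the standard argument found there---reduction to $\sigma=0$ by the isometry $e^{\sigma|D_x|}$, passage to a global estimate via a cutoff $\eta_\delta$, and reduction to the scalar time-Sobolev inequality $\|\eta_\delta g\|_{H^b_t}\le C\delta^{b'-b}\|g\|_{H^{b'}_t}$ by freezing $\xi$ and shifting $\tau\mapsto\tau+\xi^3$---and you ultimately cite the same source for the remaining dyadic analysis, so the approaches coincide.
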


\begin{lem}\label{restrict}
Let $\sigma \ge 0$, $s \in \mathbb R$, $-1/2 < b < 1/2$ and $\delta > 0$. Then for any time interval $I \subset [-\delta,\delta]$ we have
\[
  \norm{\chi_{I} u}_{X^{\sigma,s,b}} \le C \norm{u}_{X^{\sigma,s,b}(\delta)},
\]
where $\chi_I(t)$ is the characteristic function of $I$, and $C$ depends only on $b$.
\end{lem}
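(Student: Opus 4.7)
The plan is to reduce the statement in stages to the classical scalar fact that pointwise multiplication by the characteristic function of an interval is bounded on $H^b(\mathbb{R})$ whenever $|b|<1/2$, with a constant depending only on $b$. First, since $e^{\sigma|D_x|}$ is a Fourier multiplier in the $x$-variable alone, it commutes with the time cutoff $\chi_I(t)$ and defines isometric isomorphisms $X^{\sigma,s,b}\to X^{s,b}$ and $X^{\sigma,s,b}(\delta)\to X^{s,b}(\delta)$, so it suffices to treat the case $\sigma=0$.

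Next, given $\eta>0$, I would choose an extension $v\in X^{s,b}$ of $u$ with $\|v\|_{X^{s,b}}\le(1+\eta)\|u\|_{X^{s,b}(\delta)}$. Because $I\subset[-\delta,\delta]$ and $v=u$ on $\mathbb{R}\times(-\delta,\delta)$, the functions $\chi_I u$ and $\chi_I v$ coincide almost everywhere on $\mathbb{R}^2$, so after letting $\eta\to 0$ the lemma reduces to the global estimate $\|\chi_I v\|_{X^{s,b}}\le C(b)\|v\|_{X^{s,b}}$ for all $v\in X^{s,b}$. To handle this, I would invoke the standard isometric identification
\[
  \|v\|_{X^{s,b}} = \|U(-t) v\|_{H^b_t(H^s_x)},
\]
where $U(t)=e^{-t\partial_x^3}$ is the Airy propagator, obtained from the change of variables $\tau\mapsto\tau+\xi^3$ on the spacetime Fourier side. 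Since $U(-t)$ is a spatial Fourier multiplier, it commutes with $\chi_I(t)$; after tensoring with $H^s_x$, the problem is thereby reduced to the scalar bound
\[
  \|\chi_I f\|_{H^b(\mathbb{R})} \le C(b)\,\|f\|_{H^b(\mathbb{R})},
\]
with a constant independent of the interval $I$.

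The main obstacle is precisely this scalar multiplier estimate. For $0\le b<1/2$ it is most cleanly established from the Gagliardo seminorm description of $H^b(\mathbb{R})$, by splitting the double integral according to whether $t,t'$ both lie in $I$, both lie outside $I$, or straddle the boundary, and then estimating $\iint|t-t'|^{-1-2b}$ explicitly in each region; the straddling contribution is integrable precisely when $b<1/2$. The range $-1/2<b<0$ then follows by duality, while the endpoint $|b|=1/2$ is genuinely excluded by the well-known logarithmic divergence of $\|\chi_{(0,1)}\|_{H^{1/2}}$. With this scalar bound in hand, the reductions above finish the proof.
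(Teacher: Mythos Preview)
Your argument is correct. The paper itself gives no proof of this lemma but simply cites Lemma~3.1 of \cite{Colliander2004}; your reductions---first to $\sigma=0$ via the spatial multiplier $e^{\sigma|D_x|}$, then to a global estimate by choosing a near-optimal extension, then via conjugation by the Airy group to the scalar bound $\|\chi_I f\|_{H^b(\mathbb R)}\le C(b)\|f\|_{H^b(\mathbb R)}$---constitute exactly the standard proof behind that citation.

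One small imprecision: in the Gagliardo splitting the straddling contribution is (up to constants) $\int_I |f(t)|^2\,\mathrm{dist}(t,\partial I)^{-2b}\,dt$, and the phrase ``integrable precisely when $b<1/2$'' understates what is needed. You must bound this by $C(b)\|f\|_{H^b}^2$ \emph{uniformly in $I$}, which requires the fractional Hardy inequality rather than mere local integrability of the weight. Equivalently (and perhaps more cleanly), one can invoke the classical fact that $\chi_{(0,\infty)}$ is a pointwise multiplier on $H^b(\mathbb R)$ for $|b|<1/2$ and write $\chi_I$ as a difference of two translated half-line cutoffs. With that clarification your sketch is complete.
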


Next, consider the linear Cauchy problem, for given $F(x,t)$ and $u_0(x)$,
\[\begin{cases}
u_t + u_{xxx} = F, \\
u(0) = u_0.
\end{cases}\]
We may write the solution using the Duhamel formula
\[
u(t) = W(t)u_0 + \int_0^t W(t - t') F(t') \ dt',
\]
where $W(t) = e^{- t\partial_{x}^3} = e^{itD_x^3}$ is the solution group; it is the Fourier multiplier with symbol $e^{it\xi^3}$.  Then $u$ satisfies the following $X^{\sigma, s, b}$ energy estimate.
\begin{lem}\label{linear}
Let $\sigma \ge 0$, $s \in \mathbb{R}$, $1/2 < b \leq 1$ and $0 < \delta \leq 1$.  Then for all $u_0 \in G^{\sigma, s}$ and $F \in X^{\sigma, s, b-1}(\delta)$, we have the estimates
\[\begin{aligned}
& \qquad \qquad \| W(t) u_0 \|_{X^{\sigma, s, b}(\delta)} \leq C \| u_0 \|_{G^{\sigma, s}},
\\
& \left\| \int_0^t W(t - t') F(t')\ dt' \right\|_{X^{\sigma, s, b}(\delta)} \leq C \| F \|_{X^{\sigma, s, b - 1}(\delta)},
\end{aligned}\]
where the constant $C > 0$ depends only on $b$.
\end{lem}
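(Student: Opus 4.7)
The plan is to reduce both inequalities to the classical $\sigma = 0$ statements by conjugating with the Fourier multiplier $e^{\sigma|D_x|}$, and then prove the latter by a direct Fourier-side computation.

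\emph{Reduction to $\sigma = 0$.} Since $e^{\sigma|D_x|}$ and $W(t) = e^{itD_x^3}$ are both Fourier multipliers in $x$ alone, they commute, and $e^{\sigma|D_x|}$ also passes through the $t'$-integral in the Duhamel term. Combined with the identities
\[
  \|u\|_{X^{\sigma,s,b}(\delta)} = \|e^{\sigma|D_x|}u\|_{X^{s,b}(\delta)}, \qquad \|u_0\|_{G^{\sigma,s}} = \|e^{\sigma|D_x|}u_0\|_{H^s},
\]
both estimates reduce, with $v_0 = e^{\sigma|D_x|}u_0$ and $G = e^{\sigma|D_x|}F$, to the corresponding $\sigma = 0$ versions, so it suffices to treat the $X^{s,b}$ case.

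\emph{Homogeneous estimate.} Fix once and for all a cutoff $\psi \in C^\infty_c(\mathbb R)$ with $\psi \equiv 1$ on $[-1,1]$ and $\mathrm{supp}\,\psi \subset [-2,2]$. Since $\delta \le 1$, the function $v(x,t) := \psi(t) W(t) v_0(x)$ coincides with $W(t)v_0$ on $\mathbb R \times (-\delta,\delta)$, hence is an admissible extension. Because the space-time Fourier transform of $W(t)v_0$ is concentrated on the curve $\tau = \xi^3$, a direct computation gives
\[
  \widetilde{v}(\xi,\tau) = \hat\psi(\tau - \xi^3)\,\hat{v_0}(\xi).
\]
Setting $\eta = \tau - \xi^3$ separates variables in the $\tau$-integral:
\[
  \|v\|_{X^{s,b}}^2 = \|v_0\|_{H^s}^2 \int_{\mathbb R}(1+|\eta|)^{2b}|\hat\psi(\eta)|^2\,d\eta,
\]
and the last integral is finite since $\hat\psi$ is Schwartz. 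Passing to the restriction norm yields the first bound.

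\emph{Inhomogeneous estimate.} This is the more delicate step and the one I expect to be the main obstacle, since $b - 1 \in (-1/2, 0]$ rules out a naive duality argument. I would follow the standard procedure from Section~2.6 of \cite{Tao2006}: extend $G$ to $\mathbb R^2$ with $\|G\|_{X^{s,b-1}} \lesssim \|G\|_{X^{s,b-1}(\delta)}$, insert a smooth temporal cutoff $\psi(t)$, and decompose in the modulation variable according to whether $|\tau - \xi^3| \ge 1$ or $|\tau - \xi^3| < 1$. On the high-modulation region the weight $(1+|\tau-\xi^3|)^{b-1}$ supplies the requisite gain directly. On the low-modulation region one Taylor-expands $e^{-it'(\tau-\xi^3)}$ to rewrite the Duhamel operator as a convergent series of simpler operators (each of the form ``cutoff in $t$ times a moment of $G$''), each bounded since $(1+|\tau-\xi^3|)^b \lesssim 1$ there. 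Summing the two contributions and taking the infimum over extensions of $G$ completes the proof.
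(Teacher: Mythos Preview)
Your proposal is correct and follows precisely the standard route. In fact the paper does not supply its own proof of this lemma; it states it as a known result, and the argument you outline---reduction to $\sigma=0$ via the commuting multiplier $e^{\sigma|D_x|}$, then the usual cutoff-and-Fourier computation for the homogeneous piece and the low/high modulation decomposition (with Taylor expansion on the low-modulation part) for the Duhamel piece---is exactly the one found in Section~2.6 of \cite{Tao2006}, which the paper already cites for the companion lemmas.
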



\section{Bilinear estimates}\label{bilinear}

In this section we derive the bilinear estimates that lie at the core of the proofs of Theorems \ref{local} and \ref{global}. The estimates will be obtained as corollaries of the following key estimate, from \cite{KPV1996}. At the end of this section we then discuss some related counterexamples and their implications.

\begin{thm}[Kenig, Ponce and Vega {\cite[Thm.~2.2]{KPV1996}}]\label{KPVthm}
Given $s > -3/4$, there exist $b \in (1/2,1)$ and $\varepsilon > 0$ such that such that the following estimate holds for any $b' \in [b,b+\varepsilon)$:
\[
  \| \partial_x (uv) \|_{X^{s, b' - 1}} \leq C \| u \|_{X^{s, b}} \| v \|_{X^{s, b}}.
\]
Here $C > 0$ is a constant depending only on $s$, $b$ and $b'$. 
\end{thm}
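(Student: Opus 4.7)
The plan is to dualize and reduce the claim to a weighted trilinear convolution estimate, then exploit the algebraic structure of KdV resonances. Since the dual of $X^{s,b'-1}$ is $X^{-s,1-b'}$, the bound is equivalent to
\[
  \Bigl|\iint \xi\,\widetilde u(\xi_1,\tau_1)\,\widetilde v(\xi_2,\tau_2)\,\overline{\widetilde w(\xi,\tau)}\,d\xi_1\, d\tau_1\, d\xi_2\, d\tau_2\Bigr| \le C \|u\|_{X^{s,b}}\|v\|_{X^{s,b}}\|w\|_{X^{-s,1-b'}}
\]
with $\xi=\xi_1+\xi_2$ and $\tau=\tau_1+\tau_2$. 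Introducing unit-normalized $L^2$ densities $f_j(\xi_j,\tau_j) = (1+|\xi_j|)^s(1+L_j)^b\widetilde u_j(\xi_j,\tau_j)$ (writing $L_j=|\tau_j-\xi_j^3|$) and an analogous $g$ for $w$ converts the problem into bounding an explicit trilinear convolution integral against a pointwise weight built from the three $(1+|\xi_\ast|)^{\pm s}$ and $(1+L_\ast)^{\pm b}$, $(1+L)^{b'-1}$ factors.

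The crucial structural input is the KdV resonance identity
\[
  (\tau-\xi^3) - (\tau_1 - \xi_1^3) - (\tau_2 - \xi_2^3) = -3\xi\xi_1\xi_2,
\]
which forces $\max(L,L_1,L_2)\gtrsim|\xi\xi_1\xi_2|$, so one of the three modulations always delivers a dispersive gain that compensates the $\partial_x$ derivative loss. I would split into three cases according to which modulation dominates, perform dyadic Littlewood--Paley decompositions in both frequency and modulation, and apply Cauchy--Schwarz to reduce matters to an $L^2$ bound on the convolution kernel. That bound reduces via Fubini to one-dimensional calculus inequalities such as $\int d\xi_1/((1+|\xi_1|)^{2\alpha}(1+|\xi-\xi_1|)^{2\beta}) \lesssim (1+|\xi|)^{1-2\alpha-2\beta}$ valid whenever $\alpha+\beta>1/2$, together with Jacobian changes of variables in the modulation variables exploiting the quadratic derivative of $\xi_j^3$.

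The main obstacle is verifying that the exponents balance in the most delicate frequency regime, where neither the $\xi$ from $\partial_x$ nor the resonance gain from $\max L_\ast$ has room to spare: the associated dyadic sum is borderline divergent precisely at $s=-3/4$, which both pins down the threshold and leaves a small surplus that may be spent to absorb the extra $(1+L)^{b-b'}$ factor, explaining why $\varepsilon>0$ exists but must be chosen small. Once the previous bounds are strictly subcritical the final summation over dyadic scales is routine, and the constant depends only on $s$, $b$, $b'$ as claimed.
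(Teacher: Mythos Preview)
The paper does not prove this theorem at all; it is quoted verbatim from Kenig--Ponce--Vega \cite[Thm.~2.2]{KPV1996} and used as a black box (see Remarks \ref{KPVrem1} and \ref{KPVrem2} and the proofs of Corollaries \ref{Cor1} and \ref{Cor2}). So there is no proof in the paper to compare against.

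That said, your sketch is a faithful outline of how the result is actually proved in \cite{KPV1996}: dualize to a trilinear form, invoke the resonance identity $(\tau-\xi^3)-(\tau_1-\xi_1^3)-(\tau_2-\xi_2^3)=-3\xi\xi_1\xi_2$ to gain a large modulation, split according to which of $L,L_1,L_2$ dominates, and close by Cauchy--Schwarz together with the one-variable calculus lemmas you mention. The borderline at $s=-3/4$ arises exactly where you say it does, and the small $\varepsilon$-slack in $b'$ comes from the strict inequality left over once $s>-3/4$. As written this is an outline rather than a proof---the case analysis in \cite{KPV1996} involves several subcases (high--high versus high--low interactions, and careful handling of the region $|\xi_1|\sim|\xi_2|\gg|\xi|$) that take real work to check---but there is no wrong idea in it.
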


\begin{rem}\label{KPVrem1}
The actual ranges of $b$ and $b'$ can be found in \cite{KPV1996}. For example, if $s=0$ (the most important case for us), then $1/2 < b \le b' \le 3/4$ suffices.
\end{rem}

\begin{rem}\label{KPVrem2}
By Plancherel's theorem, the estimate in Theorem \ref{KPVthm} can be restated as follows:
\begin{multline}\label{KeyBilinear}
\Biggl\| \frac{\xi (1+|\xi|)^s}{(1 + | \tau - \xi^3 |)^{1 - b'}} \int_{\mathbb{R}^2} \frac{f(\xi - \xi_1, \tau - \tau_1)}{(1+|\xi-\xi_1|)^s(1 + | \tau - \tau_1 - (\xi - \xi_1)^3|)^b} \times
\\
\times \frac{g( \xi_1, \tau_1)}{(1+|\xi_1|)^s(1 + | \tau_1 - \xi_1^3|)^b} \ d \xi_1 d \tau_1 \Biggr\|_{L^2_{\xi, \tau}} \leq C \left\| f \right\|_{L^2_{\xi, \tau}} \left\| g \right\|_{L^2_{\xi, \tau}}.
\end{multline}
Here $f(\xi, \tau) = (1+|\xi|)^s (1 + | \tau - \xi^3| )^b \widetilde{u}(\xi, \tau)$ and ditto for $g$ and $\widetilde v$, so that $\| u \|_{X^{s, b}} = \left\| f \right\|_{L^2_{\xi, \tau}}$ and $\| v \|_{X^{s, b}} = \left\| g \right\|_{L^2_{\xi, \tau}}$. 
\end{rem}

The first corollary provides the key to proving Theorem \ref{local}.

\begin{cor}\label{Cor1}
Given $s > -3/4$, there exist $b \in (1/2,1)$, $b' \in (b,1)$ and $C > 0$ such that the following estimate holds for all $\sigma \ge 0$:
\[
  \| \partial_x (uv) \|_{X^{\sigma, s, b' - 1}} \leq C \| u \|_{X^{\sigma, s, b}} \| v \|_{X^{\sigma, s, b}}.
\]
\end{cor}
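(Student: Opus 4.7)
The plan is to reduce the $X^{\sigma,s,b}$-estimate to the already-known $X^{s,b}$-estimate \eqref{KeyBilinear} by exploiting the triangle inequality in the exponent of $e^{\sigma|\xi|}$. The key observation is that for $\sigma \ge 0$ and any $\xi,\xi_1 \in \mathbb R$,
\[
  e^{\sigma|\xi|} \le e^{\sigma|\xi-\xi_1|}\,e^{\sigma|\xi_1|},
\]
so the weight $e^{\sigma|\xi|}$ can be distributed across the two factors in a convolution.

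Choose $s > -3/4$ together with the exponents $b \in (1/2,1)$ and $b' \in (b,1)$ provided by Theorem \ref{KPVthm}. Writing out the norm on the left-hand side on the Fourier side and taking absolute values inside the convolution,
\[
  \|\partial_x(uv)\|_{X^{\sigma,s,b'-1}}
  = \left\| \frac{\xi (1+|\xi|)^s e^{\sigma|\xi|}}{(1+|\tau-\xi^3|)^{1-b'}}\,\widetilde{uv}(\xi,\tau)\right\|_{L^2_{\xi,\tau}},
\]
and using
\[
  \widetilde{uv}(\xi,\tau) = \int_{\mathbb{R}^2} \tilde u(\xi-\xi_1,\tau-\tau_1)\,\tilde v(\xi_1,\tau_1)\,d\xi_1\,d\tau_1,
\]
together with the factorization of $e^{\sigma|\xi|}$ above, I will bound the expression by a convolution of the non-negative functions
\[
  f(\xi,\tau) = e^{\sigma|\xi|}(1+|\xi|)^s(1+|\tau-\xi^3|)^b\,|\tilde u(\xi,\tau)|,
  \qquad
  g(\xi,\tau) = e^{\sigma|\xi|}(1+|\xi|)^s(1+|\tau-\xi^3|)^b\,|\tilde v(\xi,\tau)|,
\]
which satisfy $\|f\|_{L^2_{\xi,\tau}} = \|u\|_{X^{\sigma,s,b}}$ and $\|g\|_{L^2_{\xi,\tau}} = \|v\|_{X^{\sigma,s,b}}$. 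Inserting the compensating denominators $(1+|\xi-\xi_1|)^s(1+|\tau-\tau_1-(\xi-\xi_1)^3|)^b$ and $(1+|\xi_1|)^s(1+|\tau_1-\xi_1^3|)^b$, the right-hand side becomes exactly the left-hand side of \eqref{KeyBilinear} applied to $f$ and $g$.

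Applying \eqref{KeyBilinear} yields
\[
  \|\partial_x(uv)\|_{X^{\sigma,s,b'-1}}
  \le C\,\|f\|_{L^2_{\xi,\tau}}\|g\|_{L^2_{\xi,\tau}}
  = C\,\|u\|_{X^{\sigma,s,b}}\|v\|_{X^{\sigma,s,b}},
\]
which is the claimed estimate, with a constant $C$ independent of $\sigma \ge 0$. There is no serious obstacle: the only point to verify carefully is that the Fourier-side reformulation in Remark \ref{KPVrem2} holds for arbitrary $L^2$ functions (not merely those arising as $(1+|\xi|)^s(1+|\tau-\xi^3|)^b\tilde u$), so that it can be applied with $f,g$ above, which are non-negative but not of any special form. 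This is immediate from \eqref{KeyBilinear} as stated.
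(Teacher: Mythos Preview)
Your proof is correct and follows essentially the same approach as the paper: both arguments reduce to \eqref{KeyBilinear} via the triangle inequality $e^{\sigma|\xi|} \le e^{\sigma|\xi-\xi_1|}e^{\sigma|\xi_1|}$, with the paper phrasing this as inserting the factor $e^{\sigma|\xi|}/(e^{\sigma|\xi-\xi_1|}e^{\sigma|\xi_1|}) \le 1$ into the integral, while you equivalently absorb the exponentials into redefined $f$ and $g$.
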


\begin{proof}
This estimate can be restated in the form \eqref{KeyBilinear} modified by the factor
\[
  \frac{e^{\sigma|\xi|}}{e^{\sigma|\xi-\xi_1|}e^{\sigma|\xi_1|}}
\]
inserted into the integral on the left side. But this factor is $\le 1$ by the triangle inequality $|\xi| \le |\xi-\xi_1| + |\xi_1|$, hence the desired estimate reduces to \eqref{KeyBilinear}, that is, to Theorem \ref{KPVthm}.
\end{proof}

To state the second corollary, we introduce the bilinear operator $B_\rho$, for $\rho \ge 0$, defined on the Fourier transform side by
\[
  \widetilde{B_\rho(u,v)}(\xi,\tau) = \int_{\mathbb R^2} \left[\min(|\xi-\xi_1|,|\xi_1|)\right]^\rho
  \widetilde{u}(\xi-\xi_1, \tau-\tau_1) \, \widetilde{v}(\xi_1, \tau_1) \ d \xi_1 d \tau_1.
\]
For this operator we have the following estimate, which is crucial to the proof of Theorem \ref{global}.

\begin{cor}\label{Cor2}
Given $\rho \in [0,3/4)$, there exist $b \in (1/2,1)$ and $C > 0$ such that
\[
  \| \partial_x B_\rho(u,v) \|_{X^{0, b - 1}} \leq C \| u \|_{X^{0, b}} \| v \|_{X^{0, b}}.
\] 
\end{cor}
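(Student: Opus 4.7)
The plan is to reduce Corollary \ref{Cor2} to Theorem \ref{KPVthm} applied with $s=-\rho$ and $b'=b$. This choice is legitimate since $\rho \in [0,3/4)$ gives $s \in (-3/4,0]$, and $b' = b$ lies in the admissible range $[b, b+\varepsilon)$. The key ingredient is the pointwise inequality
\[
\min(|\xi-\xi_1|,|\xi_1|)^\rho \le 2^\rho \cdot \frac{(1+|\xi-\xi_1|)^\rho (1+|\xi_1|)^\rho}{(1+|\xi|)^\rho}, \qquad \xi, \xi_1 \in \mathbb R, \ \rho \ge 0,
\]
which I would verify by assuming without loss of generality that $|\xi_1| \le |\xi-\xi_1|$, so $\min = |\xi_1|$; the triangle inequality then gives $1+|\xi| \le 1+|\xi-\xi_1|+|\xi_1| \le 2(1+|\xi-\xi_1|)$, and the claim follows.

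With this pointwise bound in hand, I would rewrite the target estimate via Plancherel as
\[
\left\|\frac{|\xi|}{(1+|\tau-\xi^3|)^{1-b}} \int_{\mathbb R^2} \min(|\xi-\xi_1|,|\xi_1|)^\rho \, \widetilde u(\xi-\xi_1,\tau-\tau_1) \, \widetilde v(\xi_1,\tau_1) \, d\xi_1 d\tau_1\right\|_{L^2_{\xi,\tau}} \le C \|u\|_{X^{0,b}} \|v\|_{X^{0,b}}.
\]
Because the convolution kernel is nonnegative and $X^{s,b}$ norms depend only on $|\widetilde u|$, I may assume $\widetilde u, \widetilde v \ge 0$. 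Substituting the pointwise inequality and pulling the $\xi$-only factor $(1+|\xi|)^{-\rho}$ out of the inner integral, the left-hand side is dominated by $2^\rho$ times the left-hand side of \eqref{KeyBilinear} evaluated at $s = -\rho$, $b' = b$, with the choice $f = (1+|\tau-\xi^3|)^b \widetilde u$ and $g = (1+|\tau-\xi^3|)^b \widetilde v$. Since $\|f\|_{L^2} = \|u\|_{X^{0,b}}$ and $\|g\|_{L^2} = \|v\|_{X^{0,b}}$, Theorem \ref{KPVthm} then yields the desired bound.

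I do not foresee a substantive obstacle: the argument is essentially algebraic once the pointwise inequality is verified. The conceptual point is that the factor $(1+|\xi|)^{-\rho}$ produced by the pointwise inequality is exactly what is needed to shift the Sobolev exponent on the output side from $0$ down to $-\rho$, which is where Theorem \ref{KPVthm} just barely becomes applicable under the constraint $\rho < 3/4$.
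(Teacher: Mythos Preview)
Your proposal is correct and follows essentially the same route as the paper: both arguments establish the pointwise inequality $\min(|\xi-\xi_1|,|\xi_1|) \le 2(1+|\xi-\xi_1|)(1+|\xi_1|)/(1+|\xi|)$ via the triangle inequality, raise it to the power $\rho$, and thereby dominate the left-hand side by $2^\rho$ times the left side of \eqref{KeyBilinear} with $s=-\rho$ and $b'=b$. Your write-up is slightly more explicit about the verification of the pointwise bound and the identification of $f,g$, but the substance is identical.
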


\begin{proof} The desired estimate can be restated as
\begin{multline}\label{Cor2a}
\Biggl\| \frac{\xi}{(1 + | \tau - \xi^3 |)^{1 - b}} \int_{\mathbb{R}^2}
[\min(|\xi-\xi_1|,|\xi_1|)]^\rho
\frac{f(\xi - \xi_1, \tau - \tau_1)}{(1 + | \tau - \tau_1 - (\xi - \xi_1)^3|)^b} \times
\\
\times \frac{g( \xi_1, \tau_1)}{(1 + | \tau_1 - \xi_1^3|)^b} \ d \xi_1 d \tau_1 \Biggr\|_{L^2_{\xi, \tau}} \leq C \left\| f \right\|_{L^2_{\xi, \tau}} \left\| g \right\|_{L^2_{\xi, \tau}}.
\end{multline}
But from the triangle inequality it is seen that
\[
  \min(|\xi-\xi_1|,|\xi_1|)
  \le
  2\frac{(1+|\xi-\xi_1|)(1+|\xi_1|)}{(1+|\xi|)},
\]
and taking this to the power $\rho \ge 0$ we conclude that the left side of \eqref{Cor2a} is bounded by $2^\rho$ times the left side of \eqref{KeyBilinear} with $s=-\rho > -3/4$ and $b=b'$. Thus the desired estimate reduces to Theorem \ref{KPVthm}.
\end{proof}

The last corollary will be used in combination with the following estimate.
\begin{lem}\label{symbol}
For $\sigma > 0$, $\theta \in [0,1]$ and $\alpha,\beta \in \mathbb R$, we have the estimate
\begin{equation}\label{symbolest}
e^{\sigma | \alpha |} e^{\sigma | \beta |} - e^{\sigma | \alpha + \beta |} \leq \left[2 \sigma \min(| \alpha |, | \beta |)\right]^{\theta} e^{\sigma | \alpha |}e^{\sigma | \beta |}.
\end{equation}
\end{lem}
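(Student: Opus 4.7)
The plan is to rearrange the inequality by dividing through by $e^{\sigma|\alpha|}e^{\sigma|\beta|}$, reducing it to the equivalent form
\[
1 - e^{-x} \le \bigl[2\sigma \min(|\alpha|,|\beta|)\bigr]^{\theta}, \qquad x := \sigma\bigl(|\alpha|+|\beta|-|\alpha+\beta|\bigr) \ge 0,
\]
where $x \ge 0$ is guaranteed by the ordinary triangle inequality.

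The first key step is to bound $x$ by $2\sigma\min(|\alpha|,|\beta|)$. Without loss of generality assume $|\alpha| \le |\beta|$. The reverse triangle inequality gives $|\alpha+\beta| \ge |\beta| - |\alpha|$, so $|\alpha|+|\beta|-|\alpha+\beta| \le 2|\alpha| = 2\min(|\alpha|,|\beta|)$, which is exactly the claim.

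The second key step is the elementary one-variable inequality $1-e^{-x} \le x^{\theta}$, valid for all $x \ge 0$ and all $\theta \in [0,1]$. I would verify this by splitting cases: for $x \in [0,1]$ one has $1-e^{-x} \le x \le x^{\theta}$ since $x \le 1$ and $\theta \le 1$ force $x \le x^{\theta}$; for $x \ge 1$ one has $1-e^{-x} < 1 \le x^{\theta}$. Since the right side $[2\sigma\min(|\alpha|,|\beta|)]^\theta$ is monotone in its argument, combining this with the bound $x \le 2\sigma\min(|\alpha|,|\beta|)$ from the previous step yields the required inequality.

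Neither step is genuinely hard; the only place where one has to be slightly careful is the case split in the scalar inequality $1-e^{-x} \le x^\theta$, since the obvious bound $1-e^{-x}\le x$ is too weak when $x>1$ (as then $x^\theta < x$), and the obvious bound $1-e^{-x}\le 1$ is too weak when $x<1$. Handling the two regimes separately circumvents this without extra work.
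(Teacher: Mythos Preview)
Your proof is correct and follows essentially the same idea as the paper: both reduce to the scalar inequality $1-e^{-x}\le x^{\theta}$ (equivalently, the paper's $e^{x}-1\le x^{\theta}e^{x}$) applied with $x$ bounded by $2\sigma\min(|\alpha|,|\beta|)$. The only cosmetic difference is that the paper arrives at this bound via an explicit sign-and-size case analysis, whereas you obtain it more compactly from the triangle and reverse triangle inequalities; the mathematical content is identical.
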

\begin{proof}
If $\alpha$ and $\beta$ have the same sign, then the left side of \eqref{symbolest} equals zero and the inequality holds trivially, therefore we assume that $\alpha$ and $\beta$ have opposite signs.  Without loss of generality, assume $\alpha \geq 0$ and $\beta \leq 0$.  If $| \beta | \leq | \alpha |$, then $\alpha + \beta \geq 0$, and so the left side of \eqref{symbolest} becomes
\[\begin{aligned}
e^{\sigma \alpha}e^{- \sigma \beta} - e^{\sigma (\alpha + \beta)} & = e^{\sigma(\alpha + \beta)}\left( e^{-2\sigma \beta} - 1 \right) \\
& \leq (2 \sigma | \beta |)^{\theta} e^{- 2 \sigma \beta} e^{\sigma (\alpha + \beta) } \\
& = (2 \sigma | \beta |)^{\theta} e^{\sigma | \alpha |} e^{\sigma | \beta |}.
\end{aligned}\]
Here we have used the fact that, for $x \geq 0$, the inequalities $e^x - 1 \leq e^x$ and $e^x - 1 \leq xe^x$ both hold, hence also
\[
  e^x-1 \le x^\theta e^x \qquad \text{for $x \ge 0$ and $\theta \in [0,1]$}.
\]
On the other hand, if $| \beta | \geq | \alpha |$, then $\alpha + \beta \leq 0$, so the left side of \eqref{symbolest} becomes
\[\begin{aligned}
e^{\sigma \alpha}e^{- \sigma \beta} - e^{-\sigma (\alpha + \beta)} & = e^{- \sigma(\alpha + \beta)}\left( e^{2\sigma \alpha} - 1 \right) \\
& \leq (2 \sigma | \alpha |)^{\theta} e^{2 \sigma \alpha} e^{- \sigma (\alpha + \beta) } \\
& = (2 \sigma | \alpha |)^{\theta} e^{\sigma | \alpha |} e^{\sigma | \beta |}.
\end{aligned}\]
The result follows.
\end{proof}

Finally, we prove a counterexample which shows that it is essential that there is a \emph{minimum} in the symbol of the operator $B_\rho$ appearing in Corollary \ref{Cor2}. That is, if the minimum is replaced by $|\xi-\xi_1|^\rho$ or $|\xi_1|^\rho$, or even $|\xi|^\rho$, then the corresponding estimate fails for every $\rho > 0$.

\begin{thm}\label{Counter1}
Let $\sigma,s,b,b' \in \mathbb R$ and $\rho > 0$. Then the following estimates fail:
\begin{align}
  \label{False1}
  \norm{\partial_x\left(u \cdot \abs{D_x}^\rho v \right)}_{X^{\sigma,s,b'-1}} &\le C \norm{u}_{X^{\sigma,s,b}} \norm{v}_{X^{\sigma,s,b}},
  \\
  \label{False2}
  \norm{\partial_x \abs{D_x}^\rho \left(u v \right)}_{X^{\sigma,s,b'-1}} &\le C \norm{u}_{X^{\sigma,s,b}} \norm{v}_{X^{\sigma,s,b}}.
\end{align}
\end{thm}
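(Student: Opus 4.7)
The plan is to exhibit a single family of high--low frequency test functions that defeats both \eqref{False1} and \eqref{False2} simultaneously for every choice of $\sigma$, $s$, $b$, $b'$. Corollary \ref{Cor2} holds precisely because in a high--low interaction $\min(|\xi - \xi_1|, |\xi_1|)$ picks out the low frequency and stays $O(1)$, whereas $|\xi_1|^\rho$, $|\xi - \xi_1|^\rho$, or $|\xi|^\rho$ all scale like $N^\rho$ with the high frequency $N$. I would exploit precisely this asymmetry.

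For a large parameter $N$ I would take
\[
  \widetilde u = \chi_A, \qquad A = \{(\xi_1,\tau_1) : 0 \le \xi_1 \le N^{-2},\ |\tau_1| \le 1\},
\]
\[
  \widetilde v = \chi_B, \qquad B = \{(\xi_2,\tau_2) : N \le \xi_2 \le N+1,\ |\tau_2 - \xi_2^3| \le 1\}.
\]
The width $N^{-2}$ of the low-frequency strip is the crucial tuning parameter: using the identity $\xi^3 - \xi_1^3 - \xi_2^3 = 3\xi \xi_1 \xi_2$ with $\xi = \xi_1 + \xi_2$, one sees that on the support of $\widetilde u \ast \widetilde v$ we have $|\tau - \xi^3| \lesssim N^2 \xi_1 + 1 = O(1)$. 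This keeps the output Bourgain weight $(1+|\tau-\xi^3|)^{b'-1}$ bounded above and below independently of the sign of $b'-1$, and this is the point I expect to be the main subtlety: with a careless choice of strip width, $|\tau-\xi^3|$ would grow like $N^2$ and the counterexample would become sensitive to $b'$.

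With that in place the remainder is routine bookkeeping of areas and weights. I would check that $\|u\|_{X^{\sigma,s,b}} \sim N^{-1}$ and $\|v\|_{X^{\sigma,s,b}} \sim e^{\sigma N} N^s$ using $|A| \sim N^{-2}$, $|B| \sim 1$ and the fact that $|\xi_1| \le N^{-2}$ on $A$ while $|\xi_2| \sim N$ on $B$. On the output region, of area $\sim 1$, the convolution $\widetilde u \ast \widetilde v$ has magnitude $\sim N^{-2}$; the factor $\partial_x$ contributes $|\xi| \sim N$; and each of the three candidate replacements for the minimum contributes an extra $N^\rho$, since the relevant frequencies on the output side are all of size $\sim N$. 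Putting this together, both left-hand sides are of order $e^{\sigma N} N^{s+\rho-1}$, whereas $\|u\|_{X^{\sigma,s,b}} \|v\|_{X^{\sigma,s,b}} \sim e^{\sigma N} N^{s-1}$. The ratio is $\sim N^\rho$, which tends to infinity as $N \to \infty$ for every $\rho > 0$, uniformly in $\sigma$, $s$, $b$, $b'$, contradicting both \eqref{False1} and \eqref{False2}.
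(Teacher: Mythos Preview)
Your proof is correct and follows essentially the same strategy as the paper: a high--low frequency interaction with the low-frequency input placed at scale $N^{-2}$ so that the output modulation $|\tau-\xi^3|$ stays $O(1)$, yielding a ratio $\sim N^\rho$ independent of $\sigma,s,b,b'$. The only cosmetic differences are that the paper recasts the estimate via $L^2$ duality (testing against a third function $h$) and uses a high-frequency box of width $N$ rather than width $1$; neither changes the substance. One small wording caveat: of the ``three candidate replacements'' for the minimum, only $|\xi-\xi_1|^\rho$ and $|\xi|^\rho$ are $\sim N^\rho$ in your configuration (the $u$-frequency $|\xi_1|^\rho$ is $\sim N^{-2\rho}$), but since \eqref{False1} and \eqref{False2} involve precisely those two, your argument covers the theorem as stated.
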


\begin{proof} The failure happens in a low-high frequency interaction where the left sides of \eqref{False1} and \eqref{False2} are comparable, hence we only need to disprove \eqref{False2}.

By $L^2$ duality it suffices to disprove
\[
  \abs{I}
  \le C
  \norm{f}_{L^2}\norm{g}_{L^2}\norm{h}_{L^2},
\]
where
\[
  I = \int_{\mathbb R^4} \xi\abs{\xi}^{\rho} \kappa_1 \kappa_2 \,
  f(\tau_1,\xi_1) g(\tau-\tau_1,\xi-\xi_1) h(\tau,\xi) \, d\tau_1 \, d\xi_1 \, d\tau \, d\xi,
\]
\[
  \kappa_1
  =
  \frac{e^{\sigma\abs{\xi}}(1+\abs{\xi})^s}
  {e^{\sigma\abs{\xi-\xi_1}}(1+\abs{\xi-\xi_1})^s e^{\sigma\abs{\xi_1}}(1+\abs{\xi_1})^s},
\]
\[
  \kappa_2
  =
  \frac{1}{(1+\abs{\tau-\xi^3})^{1-b'}(1+\abs{\tau_1-\xi_1^3})^{b}(1+\abs{\tau-\tau_1-(\xi-\xi_1)^3})^{b}}.
\]
Let $N \gg 1$ be a parameter to be sent to infinity. Let $f(\tau_1,\xi_1)$ be the characteristic function of the region
\begin{equation}\label{fRegion}
  \frac{1}{N^2} \le \xi_1 \le \frac{2}{N^2}, \qquad \abs{\tau_1-\xi_1^3} \le 1,
\end{equation}
and let $h(\tau,\xi)$ be the characteristic function of
\begin{equation}\label{hRegion}
  N \le \xi \le 2N, \qquad \abs{\tau-\xi^3} \le 1.
\end{equation}
Then for $N$ large,
\begin{gather*}
  \frac{N}{2} \le \xi-\xi_1 \le 2N,
  \\
  \abs{\tau-\tau_1-(\xi-\xi_1)^3} = \abs{\tau-\xi^3 - (\tau_1-\xi_1^3) + 3\xi\xi_1(\xi-\xi_1)} \le 50,
\end{gather*}
and we let $g(\tau-\tau_1,\xi-\xi_1)$ be the corresponding characteristic function.

Now one observes that
\[
  \kappa_1, \kappa_2 \sim 1 \quad \text{and} \quad I \sim N^{1+\rho} N \frac{1}{N^2} = N^\rho,
\]
while
\[
  \norm{f}_{L^2}\norm{g}_{L^2}\norm{h}_{L^2} \sim \left( \frac{1}{N^2} N N \right)^{1/2} = 1.
\]
Letting $N$ tend to infinity we then get the counterexample for any $\rho > 0$.
\end{proof}


\section{Proof of Theorem \ref{local}}\label{localtheory}

Fix $\sigma > 0$, $s > -3/4$ and  $u_0 \in G^{\sigma,s}$.  In order to construct the local solution $u$ to \eqref{kdv} we proceed by an iteration argument in the space $X^{\sigma, s, b}(\delta)$.  Let $\{u^{(n)}\}_{n = 0}^{\infty}$ be the sequence defined by
\[
\begin{cases}
u_{t}^{(0)} + u_{xxx}^{(0)} = 0, \\
u^{(0)}(0) = u_0,
\end{cases} \qquad
\begin{cases}
u_{t}^{(n)} + u_{xxx}^{(n)} = - \frac{1}{2} \partial_x \left( u^{(n - 1)} u^{(n - 1)} \right), \\
u^{(n)}(0) = u_0,
\end{cases}
\]
for $n \in \{1,2,\dots\}$. Based on the comments preceding Lemma \ref{linear}, we may write
\[\begin{aligned}
& \qquad \qquad \qquad \qquad u^{(0)}(x,t) = W(t) u_0(x), \\
& u^{(n)}(x,t) = W(t)u_0(x) - \frac{1}{2} \int_0^t W(t - t') \partial_x \left(u^{(n - 1)}(x,t') u^{(n - 1)}(x,t') \right) \ dt'.
\end{aligned}\]
It then follows from Lemmas \ref{exponent} and \ref{linear} that
\begin{align}
\label{energy1}
& \qquad \qquad \qquad \qquad \qquad \| u^{(0)} \|_{X^{\sigma, s, b}(\delta)} \leq C \| u_0 \|_{G^{\sigma,s}},
\\
\label{energy2}
& \| u^{(n)} \|_{X^{\sigma, s, b}(\delta)} \leq C \| u_0 \|_{G^{\sigma,s}} + C \delta^{b' - b} \left\| \partial_x \left(u^{(n - 1)} u^{(n - 1)} \right) \right\|_{X^{\sigma, s, b' - 1}(\delta)}.
\end{align}
Choosing $1/2 < b < b' < 1$ as in Corollary \ref{Cor1}, and applying the estimate from the Corollary (restricted to a time-slab) to \eqref{energy2}, we obtain
\[
\| u^{(n)} \|_{X^{\sigma, s, b}(\delta)} \leq C \| u_0 \|_{G^{\sigma,s}} + C \delta^{b' - b} \| u^{(n-1)} \|^2_{X^{\sigma, s, b}(\delta)}.
\]
By induction it follows that
\begin{equation}\label{KeyBound}
\| u^{(n)} \|_{X^{\sigma, s, b}(\delta)} \leq 2 C \| u_0 \|_{G^{\sigma,s}}
\end{equation}
for all $n$, if $\delta \in (0,1]$ is chosen so small that
\[
\delta \leq \frac{1}{\left( 8 C^2 \| u_0 \|_{G^{\sigma,s}}\right)^{\frac{1}{b' - b}}}.
\]
With this choice, one has moreover, applying Corollary \ref{Cor1} and the energy estimate once again, and making use of the bound \eqref{KeyBound},
\[\begin{aligned}
&\| u^{(n)} - u^{(n-1)} \|_{X^{\sigma, s, b}(\delta)} \\
& \qquad\leq C \delta^{b' - b}(\| u^{(n-1)} \|_{X^{\sigma, 0, b}(\delta)} + \| u^{(n-2)} \|_{X^{\sigma, s, b}(\delta)}) \| u^{(n - 1)} - u^{(n - 2)} \|_{X^{\sigma, s, b}(\delta)} \\
& \qquad \leq \frac{1}{2}\| u^{(n - 1)} - u^{(n - 2)} \|_{X^{\sigma, s, b}(\delta)}.
\end{aligned}\]
It follows that the sequence converges to a solution $u$ verifying the bound \eqref{KeyBound}.

For the continuous dependence on the initial data, assume $u$ and $v$ are solutions to \eqref{kdv} for data $u_0$ and $v_0$, respectively.  By an argument similar to the one above, it transpires that for any $\delta' \in (0,\delta)$, with $\delta$ as above, one has the inequality
\[
\| u - v \|_{X^{\sigma, s, b}(\delta')} \leq C\| u_0 - v_0 \|_{G^{\sigma,s}} + \frac{1}{2}\| u - v \|_{X^{\sigma, s, b}(\delta')}
\]
provided $\| u_0 - v_0 \|_{G^{\sigma,s}}$ is sufficiently small. This proves continuous dependence.

Finally, we prove the (unconditional) uniqueness of solutions.  Suppose $u,v \in C_tG^{\sigma,s}$ are both solutions corresponding to initial data $u_0$, and let $w = u - v$.  Then $w$ obeys the equation $w_t + w_{xxx} + w u_x + v w_x = 0$.  Multiplying by $w$ and integrating in $x$ gives us the inequality (using $2v w_x w = (vw^2)_x - v_x w^2$)
\[
\frac{d}{dt} \| w(t) \|_{L_x^2}^2 \leq \left(\| u_x(t) \|_{L_x^{\infty}} + \| v_x(t) \|_{L_x^{\infty}}\right) \| w(t) \|_{L_x^{2}}^2.
\]
Since $u,v \in C_tG^{\sigma,s}$, it follows that each of the $L^{\infty}$ norms is bounded. It then follows from Gr\"onwall's inequality that $w = 0$. 


\section{Approximate Conservation Law}\label{conlaw}

Now that we have established the existence of local solutions, we would like to apply the local result repeatedly to cover time intervals of arbitrary length.  This of course requires some sort of control on the growth of the  norm on which the local existence time depends.  This control is afforded by the following approximate conservation law, which in the limit $\sigma \to 0$ reduces to the familiar conservation of $\int u^2(x,t) \, dx$.  The approximate conservation law will allow us (see the next section) to repeat the local result on successive short time intervals to reach any target time $T > 0$, by adjusting the strip width parameter $\sigma$ according to the size of $T$.

\begin{thm}\label{approx}
Given $\rho \in [0,3/4)$, there exist $b \in (1/2,1)$ and $C > 0$ such that for any $\delta,\sigma > 0$ and any solution $u \in X^{\sigma, 0, b}(\delta)$ to the Cauchy problem \eqref{kdv} on the time interval  $(-\delta,\delta)$, we have the estimate
\[
\sup_{\abs{t} \le \delta} \| u(t) \|^2_{G^{\sigma,0}} \leq \| u(0) \|^2_{G^{\sigma,0}} + C \sigma^{\rho} \| u \|^3_{X^{\sigma, 0, b}(\delta)}.
\]
\end{thm}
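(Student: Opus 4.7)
The plan is to work with $U(t) = e^{\sigma|D_x|}u(t)$, so that $\fixednorm{U(t)}_{L^2} = \fixednorm{u(t)}_{G^{\sigma,0}}$. Applying $e^{\sigma|D_x|}$ to \eqref{kdv} gives $U_t + U_{xxx} + \tfrac12 e^{\sigma|D_x|}\partial_x(u^2) = 0$. Multiplying by $U$ and integrating in $x$, the dispersive term vanishes by integration by parts; subtracting the identity $\int U\,\partial_x(U^2)\,dx = 0$ yields
\[
  \frac{d}{dt}\bignorm{U(t)}_{L^2}^2 = \int_{\mathbb R} U\bigl[\partial_x(U^2) - e^{\sigma|D_x|}\partial_x(u^2)\bigr]\,dx.
\]
Integrating from $0$ to $t$ reduces the theorem to bounding the spacetime integral
\[
  I(t) = \int_0^t\!\!\int_{\mathbb R} U\bigl[\partial_x(U^2) - e^{\sigma|D_x|}\partial_x(u^2)\bigr]\,dx\,dt'
\]
by $C\sigma^\rho \norm{u}_{X^{\sigma,0,b}(\delta)}^3$ uniformly in $\abs{t}\le\delta$.

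On the spacetime Fourier side, the commutator $\partial_x(U^2) - e^{\sigma|D_x|}\partial_x(u^2)$ is obtained by applying the multiplier $(i\xi)\bigl(e^{\sigma|\xi-\xi_1|}e^{\sigma|\xi_1|} - e^{\sigma|\xi|}\bigr)$ to $\widetilde U(\xi-\xi_1,\tau-\tau_1)\widetilde U(\xi_1,\tau_1)$ and integrating in $(\xi_1,\tau_1)$. Lemma \ref{symbol} with $\theta=\rho$ (legitimate since $\rho<3/4\le 1$) bounds the parenthetical factor in modulus by $(2\sigma)^\rho[\min(\abs{\xi-\xi_1},\abs{\xi_1})]^\rho e^{\sigma|\xi-\xi_1|}e^{\sigma|\xi_1|}$, so the modulus of the commutator's spacetime Fourier transform is pointwise dominated by $(2\sigma)^\rho \bigabs{\widetilde{\partial_x B_\rho(W,W)}}$, where $W$ is the function with $\widetilde W = \bigabs{\widetilde U}$. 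Since the $X^{0,b-1}$ norm depends only on the modulus of the Fourier transform, Corollary \ref{Cor2} yields, for any spacetime function $u$ on $\mathbb R^2$ (with $U = e^{\sigma|D_x|}u$),
\[
  \bignorm{\partial_x(U^2) - e^{\sigma|D_x|}\partial_x(u^2)}_{X^{0,b-1}} \le C\sigma^\rho \bignorm{u}_{X^{\sigma,0,b}}^2.
\]

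To apply this estimate on the restricted time slab, fix an extension $\tilde u$ of $u$ to $\mathbb R^2$ with $\norm{\tilde u}_{X^{\sigma,0,b}} \le 2\norm{u}_{X^{\sigma,0,b}(\delta)}$ and set $\tilde U = e^{\sigma|D_x|}\tilde u$. Since $\chi_{(0,t)}$ is supported in $(-\delta,\delta)$, replacing $(u,U)$ by $(\tilde u,\tilde U)$ throughout the integrand of $I(t)$ does not change its value. By $L^2$ duality of $X^{0,1-b}$ and $X^{0,b-1}$,
\[
  \abs{I(t)} \le \bignorm{\chi_{(0,t)}\tilde U}_{X^{0,1-b}} \cdot \bignorm{\partial_x(\tilde U^2) - e^{\sigma|D_x|}\partial_x(\tilde u^2)}_{X^{0,b-1}}.
\]
Because $b\in(1/2,1)$ gives $1-b\in(-1/2,1/2)$, Lemma \ref{restrict} combined with the embedding $X^{0,b}\subset X^{0,1-b}$ controls the first factor by $C\bignorm{\tilde U}_{X^{0,b}} \le 2C\norm{u}_{X^{\sigma,0,b}(\delta)}$, while the previous display bounds the second by $C\sigma^\rho\norm{u}_{X^{\sigma,0,b}(\delta)}^2$. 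Multiplying gives the claimed estimate. The main technical point is fitting together the pointwise Fourier-symbol gain of Lemma \ref{symbol} with the bilinear estimate of Corollary \ref{Cor2} inside the restricted Bourgain-space duality pairing in such a way that the factor $\sigma^\rho$---which is what makes the theorem useful in the global argument---survives.
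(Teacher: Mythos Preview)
Your argument is correct and follows essentially the same route as the paper (which packages the commutator estimate separately as Lemma \ref{prodlem}). One small slip: the multiplier $(i\xi)\bigl(e^{\sigma|\xi-\xi_1|}e^{\sigma|\xi_1|}-e^{\sigma|\xi|}\bigr)$ acts on $\widetilde u(\xi-\xi_1,\tau-\tau_1)\widetilde u(\xi_1,\tau_1)$, not on $\widetilde U\cdot\widetilde U$; after applying Lemma \ref{symbol} the factor $e^{\sigma|\xi-\xi_1|}e^{\sigma|\xi_1|}$ then combines with $|\widetilde u||\widetilde u|$ to produce $|\widetilde U||\widetilde U|$, so your claimed pointwise bound by $(2\sigma)^\rho\bigabs{\widetilde{\partial_x B_\rho(W,W)}}$ is indeed correct.
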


\begin{rem}\label{keyremark}
Applying this estimate to the local solution $u$ from Theorem \ref{local}, and recalling that $u$ verifies the bound \eqref{KeyBound}, one obtains
\[
  \sup_{\abs{t} \le \delta} \| u(t) \|^2_{G^{\sigma,0}} \leq \| u(0) \|^2_{G^{\sigma,0}} + C \sigma^{\rho} \| u(0) \|^3_{G^{\sigma,0}},
\]
with $\delta$ as in Theorem \ref{local}.
\end{rem}

For the proof of Theorem \ref{approx}, we require the following preliminary estimate.

\begin{lem}\label{prodlem}
Let $F$ be given by
\[
F = \frac{1}{2} \partial_x \left(e^{\sigma | D_x |}(u \cdot u) - e^{\sigma | D_x |}u \cdot e^{\sigma | D_x |}u \right).
\]
Given $\rho \in [0,3/4)$, there exist $b \in (1/2,1)$ and $C > 0$ such that for all $\sigma > 0$ and $u \in X^{\sigma,0,b}$ we have
\begin{equation}\label{prodest}
\| F \|_{X^{0,b-1}} \leq C \sigma^{\rho} \left\| u \right\|_{X^{\sigma, 0, b}}^2.
\end{equation}
\end{lem}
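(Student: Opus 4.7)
The plan is to compute the spacetime Fourier transform of $F$ explicitly, use Lemma \ref{symbol} to extract a factor of $\sigma^\rho$ from the resulting kernel, and then reduce to Corollary \ref{Cor2}.

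A direct calculation gives
\[
\widetilde{F}(\xi,\tau) = -\frac{i\xi}{2}\int_{\mathbb R^2} K(\xi,\xi_1)\,\widetilde{u}(\xi-\xi_1,\tau-\tau_1)\widetilde{u}(\xi_1,\tau_1)\,d\xi_1\,d\tau_1,
\]
where $K(\xi,\xi_1) = e^{\sigma|\xi-\xi_1|}e^{\sigma|\xi_1|} - e^{\sigma|\xi|} \geq 0$ by the triangle inequality. Applying Lemma \ref{symbol} with $\theta = \rho \in [0,3/4) \subset [0,1]$ bounds this kernel by $(2\sigma)^\rho [\min(|\xi-\xi_1|,|\xi_1|)]^\rho e^{\sigma|\xi-\xi_1|}e^{\sigma|\xi_1|}$. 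I would then introduce the auxiliary function $V$ defined on the Fourier side by $\widetilde{V}(\xi,\tau) = e^{\sigma|\xi|}|\widetilde{u}(\xi,\tau)|$, so that $\|V\|_{X^{0,b}} = \|u\|_{X^{\sigma,0,b}}$. Since the integrand defining $\widetilde{B_\rho(V,V)}$ is manifestly non-negative, combining the last two observations yields the pointwise bound
\[
|\widetilde{F}(\xi,\tau)| \leq \frac{(2\sigma)^\rho}{2}\,\bigl|\widetilde{\partial_x B_\rho(V,V)}(\xi,\tau)\bigr|.
\]
Taking $X^{0,b-1}$ norms and applying Corollary \ref{Cor2} for the given $\rho$ and a suitable $b \in (1/2,1)$ then gives
\[
\|F\|_{X^{0,b-1}} \leq \frac{(2\sigma)^\rho}{2}\,\|\partial_x B_\rho(V,V)\|_{X^{0,b-1}} \leq C\sigma^\rho\,\|V\|_{X^{0,b}}^2 = C\sigma^\rho\,\|u\|_{X^{\sigma,0,b}}^2,
\]
as claimed.

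I do not expect any substantive obstacle, since the two inputs have been engineered for exactly this purpose and the parameter ranges $\theta \in [0,1]$ (for Lemma \ref{symbol}) and $\rho \in [0,3/4)$ (for Corollary \ref{Cor2}) are compatible. The one point worth emphasizing is that the defect kernel must degenerate as $[\min(|\xi-\xi_1|,|\xi_1|)]^\rho$ rather than as any of the larger quantities $|\xi-\xi_1|^\rho$, $|\xi_1|^\rho$, or $|\xi|^\rho$; by Theorem \ref{Counter1} those weaker forms of symbol would fail the corresponding bilinear estimate, so the success of the scheme really does rest on the sharp form of Lemma \ref{symbol}.
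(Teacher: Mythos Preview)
Your proof is correct and follows exactly the route taken in the paper: apply Lemma~\ref{symbol} to the defect symbol to extract $(2\sigma)^\rho[\min(|\xi-\xi_1|,|\xi_1|)]^\rho$, then invoke Corollary~\ref{Cor2}. The paper compresses this into a single displayed inequality, whereas you spell out the Fourier computation and the passage to the auxiliary function $V$ with non-negative transform; that extra step is a harmless (and arguably clarifying) precaution, since the $X^{s,b}$ norms depend only on $|\widetilde u|$.
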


\begin{proof} By Lemma \ref{symbol} and Corollary \ref{Cor2} we have
\[
  \| F \|_{X^{0,b-1}}
  \le \frac12 (2\sigma)^\rho \| \partial_x B_\rho(e^{\sigma\abs{D_x}}u,e^{\sigma\abs{D_x}}u) \|_{X^{0,b-1}}
  \le C \sigma^\rho \left\| u \right\|_{X^{\sigma, 0, b}}^2.
\]
\end{proof}

\begin{proof}[Proof of Theorem \ref{approx}]
Let $U(x,t) = e^{\sigma | D_x | } u(x,t)$, which is real-valued since $u$ is and since the multiplier is even.  By applying the operator $e^{\sigma | D_x |}$ to \eqref{kdv}, we can see that $U$ satisfies the equation
\begin{equation} \label{modkdv}
U_t + U_{xxx} + U U_{x} = F,
\end{equation}
where
\[
F = e^{\sigma | D_x |}u \cdot e^{\sigma | D_x |}u_x - e^{\sigma | D_x |}(u u_x) = \frac{1}{2} \partial_x \left( e^{\sigma | D_x |}u \cdot e^{\sigma | D_x |}u - e^{\sigma | D_x |}(u \cdot u) \right).
\]
We multiply both sides of \eqref{modkdv} by $U$ and integrate in space, obtaining
\begin{equation}\label{integrated}
\int_{\mathbb{R}} U U_t \ dx + \int_{\mathbb{R}} U U_{xxx} \ dx + \int_{\mathbb{R}} U^2 U_x  \ dx = \int_{\mathbb{R}} U F \ dx.
\end{equation}
Integration by parts is justified, since we may assume that $U(t,x)$ decays to zero as $| x | \to \infty$, and the same holds for all spatial derivatives.\footnote{Indeed, we are aiming to prove \eqref{prodest} for a given $\sigma > 0$, but by the monotone convergence theorem it suffices to prove it for all $\sigma' < \sigma$ (the constant $C$ being uniform). For $U' = e^{\sigma' | D_x | } u$ we get by Cauchy-Schwarz and by the assumption that $u \in X^{\sigma,0,b} \subset L_t^\infty G^{\sigma,0}$,\[
  \int | \widehat{\partial_x^j U'}(t,\xi) | \, d\xi
  =
  \int | e^{(\sigma'-\sigma) | \xi | } e^{\sigma | \xi | } \xi^j \widehat u(t,\xi) | \, d\xi
  \le
  \left( \int \xi^{2j} e^{-2\varepsilon | \xi | } \, d\xi \right)^{1/2} \| u(t) \|_{G^{\sigma,0}} < \infty,
\]
where $\varepsilon = \sigma-\sigma' > 0$ and $j \in \{0,1,\dots\}$. So by Riemann-Lebesgue, $\partial_x^j U'(t,x) \to 0$ as $| x | \to \infty$.}
Thus, \eqref{integrated} can be rewritten as
\[
\frac{1}{2} \frac{d}{dt}\int_{\mathbb{R}} U^2 \ dx - \frac{1}{2} \int_{\mathbb{R}} \partial_x( U_x U_x ) \ dx + \frac{1}{3} \int_{\mathbb{R}} \partial_x (U^3) \ dx = \int_{\mathbb{R}} U F \ dx,
\]
and moreover, the second and third terms on the left side vanish. Integration in time then yields
\[
\| u(\delta) \|_{G^{\sigma,0}}^2 \leq \| u(0) \|_{G^{\sigma,0}}^2 + 2 \left| \int_{\mathbb{R}^2} \chi_{[0,\delta]}(t) \cdot U F  \ dx dt \right|.
\]
By applying Parseval's identity and H\"older's inequality, we can estimate the integral on the right side by
\begin{align*}
\left| \int_{\mathbb{R}^2} \chi_{[0,\delta]}(t) \cdot U F \ dx dt \right|
&\leq \| \chi_{[0,\delta]}(t) U \|_{X^{0,1 - b}} \| \chi_{[0,\delta]}(t) F \|_{X^{0,b - 1}}
\\
&\le C \| U \|_{X^{0,1 - b}(\delta)} \| F \|_{X^{0,b - 1}(\delta)},
\end{align*}
where we used Lemma \ref{restrict} to get the last inequality. Using now the estimate obtained in Lemma \ref{prodlem} (restricted to a time-slab) and the fact that $1 - b < b$ since $b > 1/2$, we obtain
\[
\| u(\delta) \|^2_{G^{\sigma,0}} \leq \| u(0) \|^2_{G^{\sigma,0}} + C \sigma^{\rho} \| u \|^3_{X^{\sigma, 0, b}(\delta)},
\]
and the theorem is proved.
\end{proof}


\section{Proof of Theorem \ref{global}}\label{globaltheory}

Fix $\sigma_0 > 0$, $s > -3/4$ and $u_0 \in G^{\sigma_0,s}$. Moreover, fix $\rho \in (0,3/4)$ and let $b = b(\rho) \in (1/2,1)$ be as in Theorem \ref{approx}. By invariance of the KdV equation under the reflection $(t,x) \to (-t,-x)$, we may restrict to positive times. Thus, it suffices to prove that the solution $u$ to \eqref{kdv} satisfies
\[
  u \in C\left([0,T],G^{\sigma(T),s}\right) \quad \text{for all $T > 0$},
\]
where
\[
\sigma(T) = \min \left\{\sigma_0, c T^{-1/\rho} \right\}
\]
and $c > 0$ is a constant depending on $u_0$, $\sigma_0$, $s$ and $\rho$.

By Theorem \ref{local}, there is a maximal time $T^* = T^*(u_0,\sigma_0,s) \in (0,\infty]$ such that
\[
  u \in C([0,T^*);G^{\sigma_0,s}).
\]
If $T^*=\infty$, we are done. If $T^* < \infty$, as we assume henceforth, it remains to prove
\begin{equation}\label{aim}
  u \in C\left([0,T];G^{cT^{-1/\rho},s}\right) \quad \text{for all $T \ge T^*(u_0,\sigma_0,s)$}.
\end{equation}
We first prove this in the case $s=0$. Then at the end of this section we do the general case, which essentially reduces to $s=0$.

\subsection{The case $s=0$} Fix $T \ge T^*$. Defining
\[
  M_\sigma(t) = \| u(t) \|_{G^{\sigma,0}},
\]
we will show that, for $\sigma > 0$ sufficiently small,
\begin{equation}\label{keybound}
  M^2_\sigma(t) \le 2M^2_{\sigma_0}(0) \qquad \text{for $t \in [0,T]$}.
\end{equation}
To prove this we will use repeatedly Theorems \ref{local} and \ref{approx} with the time step
\begin{equation}\label{delta}
  \delta = \frac{c_0}{[1+2M_{\sigma_0}(0)]^a},
\end{equation}
where $c_0 > 0$ and $a > 1$ are as in Theorem \ref{local} (with $s=0$). The smallness conditions on $\sigma$ will be
\begin{equation}\label{sigma0}
  \sigma \le \sigma_0
\end{equation}
and
\begin{equation}\label{sigma}
  \frac{2T}{\delta} C\sigma^\rho 2^{3/2} M_{\sigma_0}(0) \le 1,
\end{equation}
where $C > 0$ is the constant in Remark \ref{keyremark}. As we will see, \eqref{sigma} actually implies \eqref{sigma0}, but for the moment we keep both conditions.

Proceeding by induction we will verify that
\begin{align}
  \label{induction1}
  \sup_{t \in [0,k\delta]} M^2_\sigma(t) &\le M^2_\sigma(0) + k C\sigma^\rho 2^{3/2} M^3_{\sigma_0}(0),
  \\
  \label{induction2}
  \sup_{t \in [0,k\delta]} M^2_\sigma(t) &\le 2M^2_{\sigma_0}(0),
\end{align}
for $k \in \{1,\dots,n+1\}$, where $n \in \mathbb N$ is chosen so that $T \in [n\delta,(n+1)\delta)$. This $n$ does exist, since by Theorem \ref{local} and the definition of $T^*$ we have $\delta < \frac{c_0}{[1+M_{\sigma_0}(0)]^a} < T^*$, hence $\delta < T$.

In the first step we cover the interval $[0,\delta]$, and by Theorem \ref{approx} and the remark following it we have
\[
  \sup_{t \in [0,\delta]} M^2_\sigma(t) \le M^2_\sigma(0) + C\sigma^\rho M^3_\sigma(0)
  \le M^2_\sigma(0) + C\sigma^\rho M^3_{\sigma_0}(0),
\]
where we used that $M_\sigma(0) \le M_{\sigma_0}(0)$, since $\sigma \le \sigma_0$. This verifies \eqref{induction1} for $k=1$, and now \eqref{induction2} follows by using again $M_\sigma(0) \le M_{\sigma_0}(0)$ as well as $C\sigma^\rho M_{\sigma_0}(0) \le 1$. The latter follows from \eqref{sigma}, since $\delta < T$.

Next, assuming that \eqref{induction1} and \eqref{induction2} hold for some $k \in \{1,\dots,n\}$, we will prove that they hold for $k+1$. We estimate
\[\begin{alignedat}{2}
  \sup_{t \in [k\delta,(k+1)\delta]} M^2_\sigma(t) &\le M^2_\sigma(k\delta) + C\sigma^\rho M^3_\sigma(k\delta)& \qquad &\text{by Thm.~\ref{approx}}
  \\
  &\le M^2_\sigma(k\delta) + C\sigma^\rho 2^{3/2} M^3_{\sigma_0}(0)& \qquad &\text{by \eqref{induction2}}
  \\
  &\le
  M^2_\sigma(0) + k C\sigma^\rho 2^{3/2} M^3_{\sigma_0}(0)
  + C\sigma^\rho 2^{3/2} M^3_{\sigma_0}(0)& \qquad &\text{by \eqref{induction1}},
\end{alignedat}\]
verifying \eqref{induction1} with $k$ replaced by $k+1$. To get \eqref{induction2} with $k$ replaced by $k+1$, it is then enough to have
\[
  (k+1) C\sigma^\rho 2^{3/2} M^3_{\sigma_0}(0) \le M^2_{\sigma_0}(0),
\]
but this holds by \eqref{sigma}, since $k+1 \le n+1 \le T/\delta+1 < 2T/\delta$.

We have thus proved \eqref{keybound} under the smallness assumptions \eqref{sigma0} and \eqref{sigma} on $\sigma$. Since $T \ge T^*$, the condition \eqref{sigma} must fail for $\sigma=\sigma_0$, that is, the left side must be strictly larger than $1$, since otherwise we would be able to continue the solution in $G^{\sigma_0,0}$ beyond the time $T$, contradicting the maximality of $T^*$. Therefore, there must be some $\sigma \in (0,\sigma_0)$ for which equality holds in \eqref{sigma}, and using \eqref{delta} we get
\[
  2T \frac{[1+2M_{\sigma_0}(0)]^a}{c_0} C\sigma^\rho 2^{3/2} M_{\sigma_0}(0) = 1
\]
hence
\[
  \sigma = c T^{-1/\rho}, \quad \text{where} \quad
  c = \left( \frac{c_0}{C 2^{5/2} M_{\sigma_0}(0) [1+2M_{\sigma_0}(0)]^a }\right)^{1/\rho}.
\]
We have proved that \eqref{keybound} holds for this $\sigma$, hence $M_\sigma(t) < \infty$ for $t \in [0,T]$, and this completes the proof of \eqref{aim} for the case $s=0$.

\subsection{The general case}

For general $s$ we use the embedding \eqref{embedding} to get
\[
  u_0 \in G^{\sigma_0,s} \subset G^{\sigma_0/2,0}.
\]
The case $s=0$ already being proved, we know that there is a $T_0 > 0$ such that
\[
  u \in C\left([0,T_0),G^{\sigma_0/2,0}\right)
\]
and
\[
  u \in C\left([0,T],G^{2 \kappa T^{-1/\rho},0}\right) \quad \text{for $T \ge T_0$},
\]
where $\kappa > 0$ depends on $u_0$, $\sigma_0$ and $\rho$. Applying again the embedding \eqref{embedding} we now conclude that
\[
  u \in C\left([0,T_0),G^{\sigma_0/4,s}\right)
\]
and
\[
  u \in C\left([0,T],G^{\kappa T^{-1/\rho},s}\right) \quad \text{for $T \ge T_0$},
\]
and these together imply \eqref{aim}, completing the proof of Theorem \ref{global}.

\bibliographystyle{amsplain}
\bibliography{database}

\providecommand{\bysame}{\leavevmode\hbox to3em{\hrulefill}\thinspace}
\providecommand{\MR}{\relax\ifhmode\unskip\space\fi MR }
\providecommand{\MRhref}[2]{%
  \href{http://www.ams.org/mathscinet-getitem?mr=#1}{#2}
}
\providecommand{\href}[2]{#2}
\begin{thebibliography}{10}

\bibitem{BS75}
J.~L. Bona and R.~Smith, \emph{The initial-value problem for the {K}orteweg-de
  {V}ries equation}, Philos. Trans. Roy. Soc. London Ser. A \textbf{278}
  (1975), no.~1287, 555--601. \MR{0385355 (52 \#6219)}

\bibitem{BGK2005}
Jerry~L. Bona, Zoran Gruji{\'c}, and Henrik Kalisch, \emph{Algebraic lower
  bounds for the uniform radius of spatial analyticity for the generalized
  {K}d{V} equation}, Ann. Inst. H. Poincar\'e Anal. Non Lin\'eaire \textbf{22}
  (2005), no.~6, 783--797. \MR{2172859 (2006e:35282)}

\bibitem{Bourgain1993}
J.~Bourgain, \emph{Fourier transform restriction phenomena for certain lattice
  subsets and applications to nonlinear evolution equations. {II}. {T}he
  {K}d{V}-equation}, Geom. Funct. Anal. \textbf{3} (1993), no.~3, 209--262.
  \MR{1215780 (95d:35160b)}

\bibitem{CCT2003}
Michael Christ, James Colliander, and Terence Tao, \emph{Asymptotics, frequency
  modulation, and low regularity ill-posedness for canonical defocusing
  equations}, Amer. J. Math. \textbf{125} (2003), no.~6, 1235--1293.
  \MR{2018661 (2005d:35223)}

\bibitem{Colliander2004}
J.~Colliander, M.~Keel, G.~Staffilani, H.~Takaoka, and T.~Tao,
  \emph{Multilinear estimates for periodic {K}d{V} equations, and
  applications}, J. Funct. Anal. \textbf{211} (2004), no.~1, 173--218.
  \MR{2054622 (2005a:35241)}

\bibitem{GK2002}
Zoran Gruji{\'c} and Henrik Kalisch, \emph{Local well-posedness of the
  generalized {K}orteweg-de {V}ries equation in spaces of analytic functions},
  Differential Integral Equations \textbf{15} (2002), no.~11, 1325--1334.
  \MR{1920689 (2003h:35229)}

\bibitem{HHP2011}
Heather Hannah, A.~Alexandrou Himonas, and Gerson Petronilho, \emph{Gevrey
  regularity of the periodic g{K}d{V} equation}, J. Differential Equations
  \textbf{250} (2011), no.~5, 2581--2600. \MR{2756077 (2011m:35319)}

\bibitem{HP2012}
A.~Alexandrou Himonas and Gerson Petronilho, \emph{Analytic well-posedness of
  periodic g{K}d{V}}, J. Differential Equations \textbf{253} (2012), no.~11,
  3101--3112. \MR{2968194}

\bibitem{Kato79}
Tosio Kato, \emph{On the {K}orteweg-de\thinspace {V}ries equation}, Manuscripta
  Math. \textbf{28} (1979), no.~1-3, 89--99. \MR{535697 (80d:35128)}

\bibitem{K1976}
Yitzhak Katznelson, \emph{An introduction to harmonic analysis}, corrected ed.,
  Dover Publications, Inc., New York, 1976. \MR{0422992 (54 \#10976)}

\bibitem{KPV1993}
Carlos~E. Kenig, Gustavo Ponce, and Luis Vega, \emph{The {C}auchy problem for
  the {K}orteweg-de {V}ries equation in {S}obolev spaces of negative indices},
  Duke Math. J. \textbf{71} (1993), no.~1, 1--21. \MR{1230283 (94g:35196)}

\bibitem{KPV1996}
\bysame, \emph{A bilinear estimate with applications to the {K}d{V} equation},
  J. Amer. Math. Soc. \textbf{9} (1996), no.~2, 573--603. \MR{1329387
  (96k:35159)}

\bibitem{Kishimoto2009}
Nobu Kishimoto, \emph{Well-posedness of the {C}auchy problem for the
  {K}orteweg-de {V}ries equation at the critical regularity}, Differential
  Integral Equations \textbf{22} (2009), no.~5-6, 447--464. \MR{2501679
  (2010i:35333)}

\bibitem{kdv1895}
D.~Korteweg and G.~de~Vries, \emph{On the change of form of long waves
  advancing in a rectangular canal, and on a new type of long stationary
  waves}, Philos. Mag. \textbf{39} (1895), 422--443. \MR{1741772 (2001a:35005)}

\bibitem{L2012}
Qifan Li, \emph{Local well-posedness for the periodic {K}orteweg-de {V}ries
  equation in analytic {G}evrey classes}, Commun. Pure Appl. Anal. \textbf{11}
  (2012), no.~3, 1097--1109. \MR{2968611}

\bibitem{Molinet2011}
Luc Molinet, \emph{A note on ill posedness for the {K}d{V} equation},
  Differential Integral Equations \textbf{24} (2011), no.~7-8, 759--765.
  \MR{2830706 (2012g:35300)}

\bibitem{ST76}
J.~C. Saut and R.~Temam, \emph{Remarks on the {K}orteweg-de {V}ries equation},
  Israel J. Math. \textbf{24} (1976), no.~1, 78--87. \MR{0454425 (56 \#12676)}

\bibitem{ST2015}
Sigmund Selberg and Achenef Tesfahun, \emph{On the radius of spatial
  analyticity for the 1d {D}irac-{K}lein-{G}ordon equations}, Journal of
  Differential Equations \textbf{259} (2015), 4732--4744.

\bibitem{Tao2006}
Terence Tao, \emph{Nonlinear dispersive equations}, CBMS Regional Conference
  Series in Mathematics, vol. 106, Published for the Conference Board of the
  Mathematical Sciences, Washington, DC; by the American Mathematical Society,
  Providence, RI, 2006, Local and global analysis. \MR{2233925 (2008i:35211)}

\end{thebibliography}

\end{document}